\newtheorem{theorem}{Theorem}
\newtheorem{condition}[theorem]{Condition}
\newtheorem{corollary}[theorem]{Corollary}
\newtheorem{definition}[theorem]{Definition}
\newtheorem{lemma}[theorem]{Lemma}
\newtheorem{proposition}[theorem]{Proposition}
\newtheorem{remark}[theorem]{Remark}
\newenvironment{proof}[1][Proof]{\noindent\textbf{#1.} }{\ \rule{0.5em}{0.5em}}
\begin{document}

\title{The filtering equations revisited}
\author{Thomas Cass\thanks{%
Department of Mathematics, Imperial College London.} \and Martin Clark%
\thanks{%
Department of Electrical and Electronic Engineering, Imperial College London.%
} \and Dan Crisan\thanks{%
Department of Mathematics, Imperial College London.}}
\date{}
\maketitle

\begin{abstract}
The problem of nonlinear filtering has engendered a surprising number of
mathematical techniques for its treatment. A notable example is the
change-of--probability-measure method introduced by Kallianpur
and Striebel to derive the filtering equations and the Bayes-like formula
that bears their names. More recent work, however, has generally preferred
other methods. In this paper, we reconsider the change-of-measure approach
to the derivation of the filtering equations and show that many of the
technical conditions present in previous work can be relaxed. The filtering
equations are established for general Markov signal processes that can be
described by a martingale-problem formulation. Two specific applications are treated.

Keywords: Measure Valued Processes; Non-Linear Filtering,
Kallianpur-Striebel Formula, Change of Probability Measure Method, Kazamaki
criterion.
\end{abstract}

\section{Introduction}

The aim of nonlinear filtering is to estimate an evolving dynamical system,
customarily modelled by a stochastic process and called the signal process.
The signal process cannot be measured directly, but only via a related
process, termed the observation process. The filtering problem consists in
computing the conditional distribution of the signal at the current time
given the observation data accumulated up to that time. In order to describe
the contribution of the paper, we start with a few historical comments on
the subject.

The development of the modern theory of nonlinear filtering started in the
sixties with the publications of Stratonovich \cite{rls1,rls2}, Kushner \cite%
{ku1,ku2} and Shiryaev \cite{shi} for diffusions and Wonham for pure-jump
Markov processes \cite{w}; these introduced the basic form of the class of
stochastic differential equations for the conditional distributions of
partially observed Markov processes, which are now known generically as the
filtering equation. This class of equations has inspired authors to
introduce a rich variety of mathematical techniques to justify their
structure, together with that of their un-normalized form, the Zakai (or
Duncan-Mortensen-Zakai) equation, \cite{d,m,z}, and to establish the
existence, uniqueness and regularity of their solutions. A description of
much of the work on this equation and its generalizations can be found in 
\cite{kal} for papers before 1980, in \cite{ls2,ls1} for papers before 2000
and in \cite{bc,cr,x} for more recent work.

For instance, Fujisaki, Kallianpur and Kunita \cite{fkk} \ exploited a
stochastic-integral representation theorem in order to enable them to
express conditional distributions as functionals of an \textquotedblleft
innovations\textquotedblright\ martingale (a concept introduced in the
Gaussian case by Kailath \cite{kai}). Krylov, Rozovsky, Pardoux \cite%
{kr1,kr2,pp}, Chapter 6 in \cite{cr} and other authors developed a general
theory of stochastic partial differential equations that led to a direct
`PDE' approach to the filtering equations, but there are many other
approaches For example, see the work of Grigelionis and Mikulevicius on
filtering for signal and observation processes with jumps [4, Chapter 4] and
that of Kurtz and Nappo on the filtered martingale problem [4 Chapter 5].

In parallel with the above developments, Snyder \cite{snyder}, Br\'{e}maud 
\cite{bremaud} and van Schuppen \cite{sthesis} have initiated the study of
the filtering problem for observations of counting process type. A large
number of papers have been written on this class of filtering problems. Some
of the early contributors to this topic include Boel, Davis, Segal, Varaiya,
Willems and Wong, see \cite{bvw, davis, segall,vS, WvS,vSW}. Also,
Grigelionis \cite{grig} looked at filtering problems with common jumps of
the unobserved state process and of the observations. For further
developments in this directions see [4 Chapter 10].

A probabilistic approach, initially considered formally by Bucy \cite{bucy},
but developed in detail by Kallianpur and Striebel \cite{ks1,ks2}, made use
of a functional form of Bayes formula for processes, now known as the
Kallianpur-Striebel formula. This technique, which is based on a change of
probability measure that makes, at each time, the future observation process
independent of past processes, is effective for filtering problems in which
the observation process is of the \textquotedblleft signal plus white
noise\textquotedblright\ variety, where the signal is independent of the
noise process, but less so for the \textquotedblleft correlated
case\textquotedblright ; that is, for problems in which observed and
unobserved components are coupled via a common noise process. For this
reason, among probabilistic methods, the \textquotedblleft
innovations\textquotedblright\ approach is often preferred to the
\textquotedblleft change of measure\textquotedblright\ method. The
awkwardness in its application results from the fact that an exponential
local martingale, constructed via Girsanov's theorem as a process of
potential densities, has to be verified as a true martingale, and this is
generally requires ad hoc techniques peculiar to the particular filtering
problem being considered.

In this paper we re-visit the change-of-measure method and show that it can
be used to derive the filtering equations for a broad class of Markov
processes with coupled observed and unobserved components. This class
includes diffusions with jumps obeying only mild linear growth conditions on
their characteristic coefficients. Propositions are also presented that
serve to test whether the filtering equations are derivable by the
change-of-measure method for a particular filtering problem.

\section{The Filtering Framework}

Let $(\Omega ,\mathcal{F},\mathbb{P})$ be a probability space together with
a filtration $(\mathcal{F}_{t})_{t\geq 0}$ which satisfies the usual
conditions\footnote{%
The probability space $(\Omega ,\mathcal{F},\mathbb{P})$ together with the
filtration $(\mathcal{F}_{t})_{t\geq 0}$ satisfies the usual conditions
provided: a. $\mathcal{F}$ is complete i.e. $A\subset B$, $B\in \mathcal{F}$
and $\mathbb{P}(B)=0$ implies that $A\in \mathcal{F}$ and $\mathbb{P}(A)=0$,
b. The filtration $\mathcal{F}_{t}$ is right continuous i.e. $\mathcal{F}%
_{t}=\mathcal{F}_{t+}$. c. $\mathcal{F}_{0}$ (and consequently all $\mathcal{%
F}_{t}$ for $t\geq 0$) contains all the $\mathbb{P}$-null sets.}. On $%
(\Omega ,\mathcal{F},\mathbb{P})$ we consider an $\mathcal{F}_{t}$-adapted
process $\bar{X}$ with c\`{a}dl\`{a}g paths. The process $\bar{X}$ consists
in a pair of processes $X$ and $Y$, $\bar{X}=\left( X,Y\right) $. The
process $X$ is called the \emph{signal} process and is assumed to take
values in a complete separable metric space $\mathbb{S}$ (the state space).
The process $Y$ is assumed to take values in $\mathbb{R}^{m}$ and is called
the \emph{observation} process.

Let $\mathcal{B}(\mathbb{S\times R}^{m})$ be the associated product Borel $%
\sigma $-algebra on $\mathbb{S\times R}^{m}$ and $b\mathcal{B}(\mathbb{%
S\times R}^{m})$ be the space of bounded $\mathcal{B}(\mathbb{S\times R}%
^{m}) $-measurable functions. Let $A\colon b\mathcal{B}(\mathbb{S\times R}%
^{m})\rightarrow b\mathcal{B}(\mathbb{S\times R}^{m})$ and write $\mathcal{D}%
(A)\subseteq b\mathcal{B}(\mathbb{S\times R}^{m})$ for the domain of $A$. We
assume that $\mathbf{1}\in \mathcal{D}(A)$ and $A\mathbf{1}=0$. In the
following we will assume that the distribution of $X_{0}$ is $\pi _{0}\in 
\mathcal{P}(\mathbb{S})$ and that $Y_{0}=0$. Since $Y_{0}=0,$ the initial
distribution of $X$, is identical with the conditional distribution of $%
X_{0} $ given $\mathcal{Y}_{0}$ and we use the same notation for both.
Further we will assume that $\bar{X}$ is a solution of the martingale
problem for $(A,\pi _{0}\times \delta _{0})$. In other words, we assume that
the process $M^{\varphi }=\{M_{t}^{\varphi },\ t\geq 0\}$ defined as 
\begin{equation}
M_{t}^{\varphi }=\varphi (\bar{X}_{t})-\varphi (\bar{X}_{0})-\int_{0}^{t}A%
\varphi (\bar{X}_{s}){\mathrm{d}}s,\quad \ t\geq 0,
\label{generatorforXand Y}
\end{equation}%
is an $\mathcal{F}_{t}$-adapted martingale for any $\varphi \in \mathcal{D}%
(A)$. In addition, let $h=(h_{i})_{i=1}^{m}:\mathbb{S}\rightarrow \mathbb{R}%
^{m}$ be a measurable function such that 
\begin{equation}
P\left( \int_{0}^{t}\left\vert h^{i}(\bar{X}_{s})\right\vert ^{2}ds<\infty
\right) =1.  \label{weakh}
\end{equation}%
for all $t\geq 0$. Let $W$ be a standard $\mathcal{F}_{t}$-adapted $m$%
-dimensional Brownian motion defined on $(\Omega ,\mathcal{F},\mathbb{P})$.
We will assume that $Y$ satisfies the following evolution equation 
\begin{equation}
Y_{t}=\int_{0}^{t}h(\bar{X}_{s})\,\mathrm{d}s+W_{t}.
\label{eq:filterEq:observation}
\end{equation}%
To complete the description we need to identify the covariation process
between $M^{\varphi }=\{M_{t}^{\varphi },\ t\geq 0\}$ and $W$. For this we
introduce $m$ operators $B^{i}\colon b\mathcal{B}(\mathbb{S\times R}%
^{m})\rightarrow b\mathcal{B}(\mathbb{S\times R}^{m}),$ $i=1,...,m$ with $%
\mathcal{D}(A)\subseteq \mathcal{D}(B^{i})\subseteq b\mathcal{B}(\mathbb{%
S\times R}^{m})$. We assume that $\mathbf{1}\in \mathcal{D}(A)$ and $A%
\mathbf{1}=0$. We will assume that, 
\begin{equation}
\left\langle M^{\varphi },W^{i}\right\rangle _{t}=\int_{0}^{t}B^{i}\varphi
\left( \bar{X}_{s}\right) ds+\int_{0}^{t}\frac{\partial \varphi }{\partial
y_{i}}\left( \bar{X}_{s}\right) ds,  \label{covariance}
\end{equation}%
for any $t\geq 0$ and for test functions $\varphi $ both in the domain of $A$
and with bounded partial derivatives in the $y$ direction. In particular,
for functions that are constant in the second component, then we have 
\begin{equation}
\left\langle M^{\varphi },W\right\rangle _{t}=\int_{0}^{t}B^{i}\varphi
\left( X_{s},Y_{s}\right) ds.  \label{covariance2}
\end{equation}%
Let $\{\mathcal{Y}_{t},\ t\geq 0\}$ be the usual augmentation of the
filtration associated with the process $Y$, viz 
\begin{equation}
\mathcal{Y}_{t}=\bigcap_{\varepsilon >0}\sigma (Y_{s},\ s\in \lbrack
0,t+\varepsilon ])\vee \mathcal{N},~~~\mathcal{Y}=\bigvee_{t\in \mathbb{R}%
_{+}}\mathcal{Y}_{t}.  \label{yt}
\end{equation}%
where $\mathcal{N}$ is that class of all $\mathbb{P}$-null sets. Note that $%
Y_{t}$ is $\mathcal{F}_{t}$-adapted, hence $\mathcal{Y}_{t}\subset \mathcal{F%
}_{t}$.\ In the following we will assume that $\mathcal{Y}_{t}$ is a right
continuous filtration.

\begin{definition}
The filtering problem consists in determining the conditional distribution $%
\pi _{t}$ of the signal $X$ at time $t$ given the information accumulated
from observing $Y$ in the interval $[0,t]$; that is, for $\varphi \in b%
\mathcal{B}(\mathbb{S})$, computing 
\begin{equation}
\pi _{t}(\varphi )=\mathbb{E}[\varphi (X_{t})\mid \mathcal{Y}_{t}].
\label{nfp}
\end{equation}
\end{definition}

There exists a suitable regularisation of the process $\pi =\{\pi _{t},\
t\geq 0\}$, so that $\pi _{t}$ is an optional $\mathcal{Y}_{t}$-adapted
probability measure-valued process for which (\ref{nfp}) holds almost surely%
\footnote{%
See Theorem 2.1 in \cite{bc}.}. In addition, since $\mathcal{Y}_{t}$ is
right-continuous, it follows that $\pi $ has a cadlag version (see Corollary
2.26 in \cite{bc}). In the following, we take $\pi $ to be this version.

In the following we deduce the evolution equation for $\pi $. A new measure
is constructed under which $Y$ becomes a Brownian motion and $\pi $ has a
representation in terms of an associated unnormalised version $\rho $. This $%
\rho $ is then shown to satisfy a linear evolution equation which leads to
the evolution equation for $\pi $ by an application of It\^{o}'s formula.

\subsection{Preliminary Results\label{sectionremark}}

\begin{definition}
We define $H^{1}\left( \mathbb{P}\right) $ to be the set of c\`{a}dl\`{a}g
real-valued $\mathcal{F}_{t}$-martingales $M=\{M_{t},\ t\geq 0\}$ such that
the associated process $M^{\ast }=\{M_{t}^{\ast },\ t\geq 0\}$ defined as $%
M_{t}^{\ast }:=\sup_{0\leq s\leq t}\left\vert M_{s}\right\vert $ for $t\geq
0 $ is a submartingale. In particular, $\mathbb{E}\left[ M_{t}\right]
<\infty $. \ for any $t\geq 0$.
\end{definition}

\begin{remark}
$H^{1}\left( \mathbb{P}\right) $ together with the distance function 
\begin{equation*}
d\left( M,N\right) :=\sum_{n=1}^{\infty }\frac{1}{2^{n}}\min \left( \mathbb{E%
}\left[ \left( M-N\right) _{n}^{\ast }\right] ,1\right)
\end{equation*}%
is a Fr\'{e}chet space with translation invariant metric. Suppose $\left(
W_{t}\right) _{t\geq 0}$ is an $%
%TCIMACRO{\U{211d} }%
%BeginExpansion
\mathbb{R}
%EndExpansion
^{d}-$valued Brownian motion and $H=(H^{i})_{i=1}^{d}$ is an $\mathcal{F}%
_{t} $-adapted measurable $%
%TCIMACRO{\U{211d} }%
%BeginExpansion
\mathbb{R}
%EndExpansion
^{d}$-valued process such that 
\begin{equation}
P\left( \int_{0}^{t}\left\vert H_{s}\right\vert ^{2}ds<\infty \right) =1.
\label{lsq}
\end{equation}%
Define $Z=\left( Z_{t}\right) _{t\geq 0}$ to be the exponential local
martingale\footnote{%
Here and later if $a=\left( a_{i}\right) _{i=1}^{d}\in 
%TCIMACRO{\U{211d} }%
%BeginExpansion
\mathbb{R}
%EndExpansion
^{d}$, then $\left\vert a\right\vert ^{2}=\sum_{i=1}^{d}a_{i}^{2}$. Hence,
for example, in the expression for $Z$ from $\int_{0}^{t}\left\vert
H_{s}\right\vert ^{2}ds=$ $\sum_{i=1}^{d}\int_{0}^{t}\left( H_{s}^{i}\right)
^{2}ds$} 
\begin{equation*}
Z_{t}=\exp \left( \int_{0}^{t}H_{s}^{\top }dW_{s}-\frac{1}{2}%
\int_{0}^{t}\left\vert H_{s}\right\vert ^{2}ds\right) ,
\end{equation*}%
where $\int_{0}^{t}H_{s}^{\top
}dW_{s}:=\sum_{i=1}^{d}\int_{0}^{t}H_{s}^{i}dW_{s}^{i}$.
\end{remark}

\begin{lemma}[The $Z\log Z$ lemma]
\label{le:filterEqns:genuineMgale}For any $t\geq 0$ we have 
\begin{equation}
\sup_{\tau \in \mathcal{T}_{t}}\mathbb{E}\left[ Z_{\tau }\log Z_{\tau }%
\right] =\frac{1}{2}\mathbb{E}\left[ \int_{0}^{t}Z_{s}\left\vert
H_{s}\right\vert ^{2}ds\right] \in \left[ 0,\infty \right] ,  \label{id}
\end{equation}%
where $\mathcal{T}_{t}$ is the set of $\left( \mathcal{F}_{t}\right) $%
-stopping times bounded by $t$. If furthermore the terms in (\ref{id}) are
finite, then they are both equal to $\mathbb{E}\left[ Z_{t}\log Z_{t}\right]
.$ We also have%
\begin{equation}
\mathbb{E}\left[ Z_{t}^{\ast }\right] \leq \frac{e+1}{e-1}+\frac{e}{2\left(
e-1\right) }\mathbb{E}\left[ \int_{0}^{t}Z_{s}\left\vert H_{s}\right\vert
^{2}ds\right] \in \left[ 0,\infty \right] .  \label{logz}
\end{equation}
\end{lemma}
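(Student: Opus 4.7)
The natural starting point is Itô's formula applied to $f(z) = z\log z$ and the exponential local martingale $Z$, which satisfies $dZ_s = Z_s H_s^\top dW_s$. Since $f'(z) = \log z + 1$ and $f''(z) = 1/z$, this yields the key decomposition
\[
Z_t \log Z_t = \int_0^t (\log Z_s + 1)\, Z_s H_s^\top\, dW_s + \frac{1}{2}\int_0^t Z_s |H_s|^2\, ds,
\]
expressing $Z_t \log Z_t$ as a continuous local martingale plus a non-decreasing process. I would localize using $\sigma_n = \inf\{s : Z_s \vee Z_s^{-1} \geq n \text{ or } \int_0^s|H_u|^2\,du \geq n\}$; on $[0,\sigma_n]$ the process $Z$ is bounded between $1/n$ and $n$ and the quadratic variation of the stochastic integral is bounded, so taking expectations gives the exact equality
\[
\mathbb{E}\bigl[Z_{\tau\wedge\sigma_n}\log Z_{\tau\wedge\sigma_n}\bigr] = \frac{1}{2}\mathbb{E}\left[\int_0^{\tau\wedge\sigma_n} Z_s|H_s|^2\,ds\right]
\]
for every $\tau \in \mathcal{T}_t$ and every $n$.

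For (\ref{id}), the inequality $\leq$ follows by passing to the $\liminf_n$: monotone convergence handles the right-hand side, while Fatou's lemma applied to the non-negative sequence $Z_{\tau\wedge\sigma_n}\log Z_{\tau\wedge\sigma_n} + 1/e$ (valid because $z\log z \geq -1/e$) bounds the left-hand side. The reverse inequality is immediate on taking $\tau = t\wedge\sigma_n$ in the localised identity and letting $n \to \infty$. For (\ref{logz}), I would apply Doob's $L\log L$ maximal inequality to the bounded non-negative martingale $(Z_{s\wedge\sigma_n})_{s\in[0,t]}$, then use $z\log^+ z \leq z\log z + 1/e$ together with the localised identity to obtain a bound on $\mathbb{E}[Z^*_{t\wedge\sigma_n}]$ of the form $\frac{e}{e-1}(1 + 1/e + C)$, where $C$ denotes the right-hand side of (\ref{id}); monotone convergence for $Z^*_{t\wedge\sigma_n} \uparrow Z^*_t$ then delivers (\ref{logz}) with precisely the stated constants.

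The subtlest point is the middle assertion: when $C < \infty$, the common value equals $\mathbb{E}[Z_t\log Z_t]$. The inequality $\mathbb{E}[Z_t\log Z_t] \leq C$ follows from Fatou as above. For the reverse, I would first show that $\{Z_{t\wedge\sigma_n}\}_n$ is uniformly integrable via the elementary estimate $x \leq x\log x / \log K$ on $\{x > K\}$ (for $K > 1$), giving $\mathbb{E}[Z_{t\wedge\sigma_n}\mathbf{1}_{\{Z_{t\wedge\sigma_n} > K\}}] \leq (C+1/e)/\log K$ uniformly in $n$. This upgrades $Z$ to a true martingale on $[0,t]$ and identifies $Z_{t\wedge\sigma_n} = \mathbb{E}[Z_t \mid \mathcal{F}_{t\wedge\sigma_n}]$ by optional sampling. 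Jensen's inequality combined with $Z_t\log Z_t \in L^1$ (from the already established one-sided Fatou bound) then yields the domination
\[
Z_{t\wedge\sigma_n}\log Z_{t\wedge\sigma_n} + 1/e \leq \mathbb{E}\bigl[Z_t\log Z_t + 1/e \,\big|\, \mathcal{F}_{t\wedge\sigma_n}\bigr],
\]
so that $(Z_{t\wedge\sigma_n}\log Z_{t\wedge\sigma_n})_n$ is dominated by a uniformly integrable Doob martingale and is therefore itself uniformly integrable; Vitali's convergence theorem, applied to the localised identity at $\tau = t$, closes the argument. The main obstacle lies precisely here: Fatou alone delivers only the easy one-sided bound, and closing the gap requires the two-stage uniform integrability argument (first for $Z_{t\wedge\sigma_n}$, then for $Z_{t\wedge\sigma_n}\log Z_{t\wedge\sigma_n}$) to upgrade a.s. convergence to $L^1$ convergence.
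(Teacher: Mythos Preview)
Your proof is correct and rests on the same ingredients as the paper's: the It\^o decomposition $Z_t\log Z_t = M_t + A_t$, localisation, Fatou plus monotone convergence, a de la Vall\'ee--Poussin uniform-integrability argument, and Doob's $L\log L$ maximal inequality. The organisation, however, differs in a way worth noting.

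You establish the identity (\ref{id}) unconditionally first, using a single localising sequence $\sigma_n$, and only afterwards treat the middle assertion by a two-stage uniform-integrability argument: first $\{Z_{t\wedge\sigma_n}\}_n$ (which promotes $Z$ to a martingale), then $\{Z_{t\wedge\sigma_n}\log Z_{t\wedge\sigma_n}\}_n$ via Jensen and domination by a Doob martingale, finishing with Vitali. The paper instead argues by cases on which side of (\ref{id}) is assumed finite, and in the finite-$\sup_\tau$ case exploits the observation that once $Z$ is a martingale, $L=Z\log Z$ is a \emph{submartingale}; this gives the one-line bound $\mathbb{E}[L_{\tau\wedge\sigma_n}]\leq\mathbb{E}[L_\tau]$, which together with Fatou and monotone convergence closes the gap without needing a second uniform-integrability step. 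Your route is more linear and perhaps more transparent; the paper's submartingale trick is shorter but requires spotting that the submartingale property can replace the second UI argument. Both yield exactly the stated constants in (\ref{logz}).
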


As an immediate consequence of this lemma we have

\begin{corollary}
\label{corgenuinemart} If the terms in (\ref{id}) are finite,then $\left(
Z_{t}\right) _{t\geq 0}$ is a genuine martingale, uniformly integrable over
any finite interval $\left[ 0,t\right] $, that belongs to $H^{1}\left( 
\mathbb{P}\right) .$
\end{corollary}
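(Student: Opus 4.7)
The plan is to extract all three conclusions directly from the bound \eqref{logz}, using standard properties of non-negative local martingales. I will proceed in the order: $H^{1}$-membership first, then uniform integrability, then the upgrade from local to genuine martingale.

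First I would note that, under the hypothesis, \eqref{logz} gives $\mathbb{E}[Z_{t}^{\ast }]<\infty $ for every $t\geq 0$. Since the process $Z^{\ast }=\{Z_{t}^{\ast }\}$ is adapted, non-negative, and non-decreasing in $t$, this integrability is all that is needed to turn $Z^{\ast }$ into a submartingale: for $s\leq t$, $\mathbb{E}[Z_{t}^{\ast }\mid \mathcal{F}_{s}]\geq Z_{s}^{\ast }$ follows from monotonicity alone. Provided we have already established that $Z$ itself is a c\`{a}dl\`{a}g $\mathcal{F}_{t}$-martingale, this verifies $Z\in H^{1}(\mathbb{P})$.

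Next, for uniform integrability on $[0,t]$, I would observe that $0\leq Z_{s}\leq Z_{t}^{\ast }$ for every $s\in \lbrack 0,t]$, so the family $\{Z_{s}:0\leq s\leq t\}$ is dominated by the integrable random variable $Z_{t}^{\ast }$ and is therefore uniformly integrable.

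The remaining step is to promote $Z$ from a non-negative local martingale (which it is by Ito's formula applied to the definition of $Z$, using \eqref{lsq} to make the stochastic integral well-defined) to a genuine martingale. Let $(\tau _{n})_{n\geq 1}$ be a localising sequence with $\tau _{n}\uparrow \infty $ almost surely such that each stopped process $Z^{\tau _{n}}$ is a true martingale; in particular $\mathbb{E}[Z_{t\wedge \tau _{n}}]=1$ for every $n$. Since $0\leq Z_{t\wedge \tau _{n}}\leq Z_{t}^{\ast }$ and $\mathbb{E}[Z_{t}^{\ast }]<\infty $, dominated convergence yields $\mathbb{E}[Z_{t}]=\lim_{n}\mathbb{E}[Z_{t\wedge \tau _{n}}]=1$. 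A non-negative local martingale is automatically a supermartingale, and a supermartingale whose expectation is constant is necessarily a martingale, so $Z$ is a genuine martingale on $[0,t]$, and since $t\geq 0$ was arbitrary, on $[0,\infty )$.

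There is no real obstacle here; the content has been absorbed into the $Z\log Z$ lemma, and the only point to be careful about is the passage to the limit in the localising sequence. The $L\log L$-type bound of \eqref{id} and the maximal estimate \eqref{logz} are exactly the ingredients needed to upgrade the local martingale property, precisely in the spirit of the Kazamaki/de la Vall\'{e}e Poussin criterion alluded to in the introduction.
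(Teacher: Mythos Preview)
Your argument is correct. The three conclusions are extracted cleanly from the maximal bound \eqref{logz}: domination by the integrable $Z_{t}^{\ast}$ gives uniform integrability and, via dominated convergence along a localising sequence, the identity $\mathbb{E}[Z_{t}]=1$; the supermartingale-with-constant-mean step then upgrades $Z$ to a martingale, after which integrability of the increasing process $Z^{\ast}$ makes it a submartingale and hence $Z\in H^{1}(\mathbb{P})$.

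The paper proceeds slightly differently. Within the proof of the $Z\log Z$ lemma it uses the finiteness of $\sup_{\tau\in\mathcal{T}_{t}}\mathbb{E}[Z_{\tau}\log Z_{\tau}]$ directly as a de~la~Vall\'ee~Poussin criterion: this yields uniform integrability of the whole family $\{Z_{\tau}:\tau\in\mathcal{T}_{t}\}$ in one stroke, from which the martingale property on $[0,t]$ is immediate (a local martingale of class~(D) is a martingale). The $H^{1}$ membership then follows from \eqref{logz}. Your route is more elementary in that it relies only on the maximal estimate \eqref{logz} and avoids invoking the $L\log L$ identity \eqref{id} explicitly; the paper's route is shorter because the de~la~Vall\'ee~Poussin argument delivers uniform integrability and the martingale property simultaneously, without passing through the ``supermartingale with constant expectation'' characterisation.
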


\begin{remark}
The first part of this corollary -- that $Z$ is a martingale if the terms in
(\ref{id}) are finite -- is not new. At the time of going to press J. Ruf
brought to the authors' attention that it is a consequence of the either of
two more general results: see Theorem 1 and Corollary 5 in \cite{ruf}. The
additional generality these results is in fact unnecessary for us. Since the
governing considerations of our presentation are those of economy and
self-sufficiency, we include a short proof of our result below.
\end{remark}

\begin{proof}
Let $L_{t}:=Z_{t}\log Z_{t}$ for $t\geq 0.$ If we assume that $\sup_{\tau
\in \mathcal{T}_{t}}\mathbb{E}\left[ L_{\tau }\right] $ is finite, then for
all $K\geq e$%
\begin{equation*}
\sup_{\tau \in \mathcal{T}_{t}}\mathbb{E}\left[ \left\vert Z_{\tau
}\right\vert 1_{\left\{ \left\vert Z_{\tau }\right\vert \geq K\right\} }%
\right] \leq \sup_{\tau \in \mathcal{T}_{t}}\frac{\mathbb{E}\left[
\left\vert Z_{\tau }\right\vert \log Z_{\tau }1_{\left\{ \left\vert Z_{\tau
}\right\vert \geq K\right\} }\right] }{\log K}\leq \frac{1}{\log K}\left(
\sup_{\tau \in \mathcal{T}_{t}}\mathbb{E}\left[ L_{\tau }\right]
+e^{-1}\right)
\end{equation*}%
the right hand side of which tends to zero as $K\rightarrow \infty .$ Hence
the family random variables 
\begin{equation*}
\left\{ Z_{\tau }:\tau \in \mathcal{T}_{t}\right\}
\end{equation*}%
is uniformly integrable. $Z$ is thus a martingale over $\left[ 0,t\right] $
and $L$, by Jensen's inequality, is a submartingale$.$ Using $P\left(
0<Z_{t}<\infty ,\ \text{for\ all}\ t<\infty \right) =1$ we have from It\^{o}%
's formula that 
\begin{equation*}
L_{t}=\underset{:=M_{t}}{\underbrace{\int_{0}^{t}\left( 1+\log Z_{s}\right)
Z_{s}H_{s}^{\top }dW_{s}}}+\underset{:=A_{t}}{\underbrace{\frac{1}{2}%
\int_{0}^{t}Z_{s}\left\vert H_{s}\right\vert ^{2}ds}},
\end{equation*}%
$M$ is a local martingale, hence the stopped process $M_{\cdot }^{\sigma
_{n}}:=M_{\cdot \wedge \sigma _{n}}$ is a martingale for some localising
sequence $0\leq \sigma _{n}\leq \sigma _{n+1}\uparrow \infty $ as $%
n\rightarrow \infty .$ For any $\tau \in \mathcal{T}_{t}$ we obtain%
\begin{equation*}
\mathbb{E}\left[ L_{\tau }^{\sigma _{n}}\right] =\mathbb{E}\left[ A_{\tau
}^{\sigma _{n}}\right] \leq \mathbb{E}\left[ L_{\tau }\right] \leq \mathbb{E}%
\left[ L_{t}\right] .
\end{equation*}%
Then, using Fatou's lemma\footnote{%
Which we may do since $L$ is bounded from below by $-e^{-1}.$} and the
monotone convergence theorem, we have 
\begin{equation*}
\mathbb{E}\left[ L_{\tau }\right] \leq \lim \inf_{n\rightarrow \infty }%
\mathbb{E}\left[ L_{\tau }^{\sigma _{n}}\right] =\lim \inf_{n\rightarrow
\infty }\mathbb{E}\left[ A_{\tau }^{\sigma _{n}}\right] =\mathbb{E}\left[
A_{\tau }\right] \leq \mathbb{E}\left[ L_{\tau }\right] \leq \mathbb{E}\left[
L_{t}\right] .
\end{equation*}%
Finally taking the supremum over $\tau \in \mathcal{T}_{t}$ yields%
\begin{equation*}
\mathbb{E}\left[ L_{t}\right] \leq \sup_{\tau \in \mathcal{T}_{t}}\mathbb{E}%
\left[ L_{\tau }\right] \leq \sup_{\tau \in \mathcal{T}_{t}}\mathbb{E}\left[
A_{\tau }\right] \leq \mathbb{E}\left[ A_{t}\right] \leq \sup_{\tau \in 
\mathcal{T}_{t}}\mathbb{E}\left[ L_{\tau }\right] \leq \mathbb{E}\left[ L_{t}%
\right] .
\end{equation*}%
and the equality (\ref{id}) holds in this case. If instead we know that $%
\mathbb{E}\left[ A_{t}\right] \,<\infty $, then by defining the sequence of
stopping times $\left( \tau _{n}\right) _{n=1}^{\infty },$ $0\leq \tau
_{n}\leq \tau _{n+1}$ \ by 
\begin{equation*}
\tau _{n}=\inf \left\{ t\geq 0:\left\vert Z_{t}\right\vert =\frac{1}{n}\text{
or }\left\vert Z_{t}\right\vert =n\right\}
\end{equation*}%
we have 
\begin{equation*}
\mathbb{E}\left[ M_{t\wedge \tau _{n}}^{2}\right] =\mathbb{E}\left[
\int_{0}^{t\wedge \tau _{n}}\left( 1+\log Z_{s}\right)
^{2}Z_{s}^{2}\left\vert H_{s}\right\vert ^{2}ds\right] \leq 2n^{2}\left(
1+\log n\right) ^{2}\mathbb{E}\left[ A_{t}\right] <\infty .
\end{equation*}%
From this we deduce that the stopped process $M_{\cdot }^{\tau
_{n}}:=M_{\cdot \wedge \tau _{n}}$ is a square-integrable martingale over $%
\left[ 0,t\right] .$ Combining this with the fact that $A_{t\wedge \tau
_{n}}\leq A_{t}$ yields%
\begin{equation*}
\mathbb{E}\left[ L_{\tau \wedge \tau _{n}}\right] =\mathbb{E}\left[ A_{\tau
\wedge \tau _{n}}\right] \leq \mathbb{E}\left[ A_{t}\right]
\end{equation*}%
for any $\tau \in \mathcal{T}_{t}.$ We notice that $\tau _{n}\uparrow \infty 
$ ,\ and hence $Z_{t\wedge \tau _{n}}\rightarrow Z_{t}$ a.s. as $%
n\rightarrow \infty .$ Then applying Fatou's lemma and taking the supremum
over all $\tau \in \mathcal{T}_{t}$ then gives that $\sup_{\tau \in \mathcal{%
T}_{t}}\mathbb{E}\left[ L_{\tau }\right] \leq \mathbb{E}\left[ A_{t}\right]
\,<\infty .$ The equality 
\begin{equation*}
\mathbb{E}\left[ L_{t}\right] =\mathbb{E}\left[ A_{t}\right] =\sup_{\tau \in 
\mathcal{T}_{t}}\mathbb{E}\left[ L_{\tau }\right] \in \lbrack 0,\infty )
\end{equation*}%
then follows from the first part of the proof. It is clear from the argument
that $A_{t}$ is not integrable if and only if $\sup_{\tau \in \mathcal{T}%
_{t}}\mathbb{E}\left[ L_{\tau }\right] =\infty .$

Turning attention to (\ref{logz}), we observe that the stopped process $%
L^{\tau _{n}}$ is a bounded submartingale, with a bounded martingale part
given by $M^{\tau _{n}}$. Hence, by a modification of a standard maximal
inequality (see page 52 in \cite{revuzyor}), we deduce that 
\begin{eqnarray*}
\mathbb{E}\left[ \left( Z^{\tau _{n}}\right) _{t}^{\ast }\right] &\leq &%
\frac{e+1}{e-1}+\frac{e}{e-1}\mathbb{E}\left[ L_{t\wedge \tau _{n}}\right] \\
&\leq &\frac{e+1}{e-1}+\frac{e}{e-1}\mathbb{E}\left[ A_{t\wedge \tau _{n}}%
\right] .
\end{eqnarray*}%
The proof is finished by an application of the monotone convergence theorem.
\end{proof}

\begin{remark}[A comparison with Kazamaki's criterion]
The criterion of finite transformed average energy: 
\begin{equation}
\mathbb{E}\left[ \int_{0}^{t}Z_{s}\left\vert H_{s}\right\vert ^{2}ds\right]
<\infty ,  \label{loc finite energy}
\end{equation}%
turns out to be a criterion for $Z$ to be a martingale that is independent
of Kazamaki's criterion -- and therefore of Novikov's criterion -- in the
sense that one is sometimes applicable when the other is not.

We give two examples to illustrate this. First, we can make use of a simple
example introduced in Revuz and Yor \cite{revuzyor} (page 366, Exercise
2.10.40) in which Kazamaki's criterion fails. Let $W$ be a scalar Brownian
motion with $W_{0}=0$ and set $H_{t}=\alpha W_{t}$ for some $\alpha >0.$
Recall that Kazamaki's criterion is that $\exp \left( \frac{1}{2}%
\int_{0}^{\cdot }H_{s}^{T}dW_{s}\right) $ should be a submartinagle. But, as
Revuz and Yor point out, $Z_{\cdot }=\exp \left( \alpha \int_{0}^{\cdot
}W_{s}dW_{s}-\frac{\alpha ^{2}}{2}\int_{0}^{\cdot }W_{s}^{2}ds\right) $ is a
true martingale on $[0,\infty )$ for all $\alpha ,$ but $\exp \left( \frac{%
\alpha }{2}\int_{0}^{t}W_{s}dW_{s}\right) $ ceases to be a submartingale for 
$t\geq \alpha ^{-1}.$ However, under the transformed probability measure $%
\mathbb{\tilde{P}}$, defined on the $\sigma $-ring $\cup _{t\geq 0}\mathcal{F%
}_{t}$ by 
\begin{equation*}
\left. \frac{d\mathbb{\tilde{P}}}{d\mathbb{P}}\right\vert _{\mathcal{F}%
_{t}}=Z_{t},
\end{equation*}%
$W$ is turned into a Gaussian semimartingale satisfying%
\begin{equation*}
W_{t}=\int_{0}^{t}\alpha W_{s}ds+B_{t}
\end{equation*}%
for some $\left( \left\{ \mathcal{F}_{t}\right\} _{t\geq 0},\mathbb{\tilde{P}%
}\right) $ Brownian motion $B.$ But $W$ can also be expressed as 
\begin{equation*}
W_{t}=\int_{0}^{t}e^{\alpha \left( t-s\right) }dB_{s}
\end{equation*}%
and then it is straightforward to show that for all $t\geq 0$%
\begin{equation*}
\mathbb{E}\left[ \int_{0}^{t}Z_{s}H_{s}^{2}ds\right] =\mathbb{\tilde{E}}%
\left[ \alpha ^{2}\int_{0}^{t}W_{s}^{2}ds\right] =\frac{1}{4}\left(
e^{2\alpha t}-2\alpha t-1\right) .
\end{equation*}%
Hence the transformed average energy condition is applicable in this case.

To give an example in the other direction, we construct a stopping time $S<1$
a.s., a continuous local martingale $X$ on $\left[ 0,1\right] $ with
quadratic variation 
\begin{equation*}
\left\langle X\right\rangle _{\cdot }=\int_{0}^{S\wedge \cdot }\frac{dr}{%
\left( 1-r\right) ^{2}}
\end{equation*}%
such that $e^{\frac{1}{2}X_{\cdot }}$ is a submartingale on $\left[ 0,1%
\right] $ and the transformed average energy satisfies 
\begin{equation*}
\mathbb{E}\left[ \int_{0}^{1}\frac{\zeta _{r}}{\left( 1-r\right) ^{2}}dr%
\right] =\mathbb{E}\left[ \int_{0}^{S}\frac{\zeta _{r}}{\left( 1-r\right)
^{2}}dr\right] =\infty ,
\end{equation*}%
where $\zeta $ is the exponential local martingale $\zeta _{t}=e^{X_{t}-%
\frac{1}{2}\left\langle X\right\rangle _{t}}.$ For this example, Kazamaki's
criterion implies that $\zeta $ is a martingale on the closed interval $%
\left[ 0,1\right] ,$ while the average energy condition fails to do so for $%
t=1.$

Suppose $W$ is an $\left\{ \mathcal{F}_{t}\right\} $-adapted Brownian
motion, null at zero, on a filtered probability space $\left( \Omega ,%
\mathcal{F},\left\{ \mathcal{F}_{t}\right\} _{t\geq 0},\mathbb{\tilde{P}}%
\right) $ and $N$ is an $\mathcal{F}_{0}-$measurable integer-valued random
variable, independent of $W$, with distribution under $\mathbb{P}$ given by $%
\mathbb{P}\left( N=n\right) =1/(n(n+1))$ for $n\in 
%TCIMACRO{\U{2115} }%
%BeginExpansion
\mathbb{N}
%EndExpansion
.$ Introduce a sequence of stopping times%
\begin{equation*}
T_{n}:=\inf \left\{ t\geq 0:W_{t}=-1\text{ or }W_{t}=n\right\} ,
\end{equation*}%
with the convention that $T_{n}=\infty $ if this set is empty.

For each $n,$ $\mathbb{\tilde{P}}\left( T_{n}<\infty \right) =1$ and $%
W_{T_{n}\wedge \cdot }$ is a zero-mean bounded martingale with $\mathbb{%
\tilde{P}}\left( W_{T_{n}}=-1\right) =n/(n+1)$ and $\mathbb{\tilde{P}}\left(
W_{T_{n}}=n\right) =1/(n+1).$ Furthermore, by Jensen's inequality, $\exp
\left( -\frac{1}{2}W_{T_{n}\wedge \cdot }\right) $ is a positive
submartingale which is bounded uniformly in $n$ and $t$ by $e^{1/2}.$ We now
let $T=T_{N}.$ The process $e^{-\frac{1}{2}W_{T\wedge \cdot }}$ is also a
bounded submartingale since for all stopping times $R<S$ and for all $n$%
\begin{equation*}
\mathbb{E}_{\mathbb{\tilde{P}}}\left[ e^{-\frac{1}{2}W_{T\wedge R}};N=n%
\right] =\mathbb{E}_{\mathbb{\tilde{P}}}\left[ e^{-\frac{1}{2}W_{T_{n}\wedge
R}};N=n\right] \leq \mathbb{E}_{\mathbb{\tilde{P}}}\left[ e^{-\frac{1}{2}%
W_{T_{n}\wedge S}};N=n\right] =\mathbb{E}_{\mathbb{\tilde{P}}}\left[ e^{-%
\frac{1}{2}W_{T\wedge S}};N=n\right] .
\end{equation*}%
Now the strictly positive local martingale $\tilde{Z}_{\cdot
}:=e^{-W_{T\wedge \cdot }-\frac{1}{2}T\wedge \cdot }$ is bounded and hence
is a uniformly integrable martingale of the form $\tilde{Z}_{t}=\mathbb{E}_{%
\mathbb{\tilde{P}}}\left[ \left. \tilde{Z}_{T}\right\vert \mathcal{F}_{t}%
\right] .$ Let $\mathbb{P}$ be the probability measure which is equivalent
to $\mathbb{\tilde{P}}$ defined by $d\mathbb{P=}\tilde{Z}_{T}d\mathbb{\tilde{%
P}}$. Define on $\left[ 0,\infty \right] $ the process $Y:$%
\begin{eqnarray*}
Y_{t} &=&W_{T\wedge t}+T\wedge t\text{\ for }t\in \lbrack 0,\infty ),\text{
and} \\
Y_{\infty } &=&\left( W_{T}+T\right) 1_{\left\{ T<\infty \right\} }.
\end{eqnarray*}%
Girsanov's Theorem tells us that $Y$ is a local martingale under $\mathbb{P}$%
. Set $Z_{t}=\left( \tilde{Z}_{t}\right) ^{-1}.$ Then $Z_{\cdot
}=e^{Y_{\cdot }-T\wedge \cdot }$ on $[0,\infty ).$ We need to show that $e^{%
\frac{1}{2}Y_{\cdot }}$ is a submartingale under $\mathbb{P}$. But this
follows from the fact that for any finite stopping times $R<S,$%
\begin{eqnarray*}
\mathbb{E}_{\mathbb{P}}\left[ e^{\frac{1}{2}Y_{R}}\right] &=&\mathbb{E}_{%
\mathbb{\tilde{P}}}\left[ \tilde{Z}_{R}e^{\frac{1}{2}Y_{R}}\right] =\mathbb{E%
}_{\mathbb{\tilde{P}}}\left[ e^{-\frac{1}{2}W_{R}}\right] \\
&\leq &\mathbb{E}_{\mathbb{\tilde{P}}}\left[ e^{-\frac{1}{2}W_{S}}\right] =%
\mathbb{E}_{\mathbb{P}}\left[ e^{\frac{1}{2}Y_{S}}\right] ,
\end{eqnarray*}%
where we have used the fact that $e^{-\frac{1}{2}W_{\cdot }}$ is a
submartingale under $\mathbb{\tilde{P}}$. So Kazamaki's criterion allows us
to construct a probability measure $\mathbb{\bar{P}}$ such that, for all
stopping times $S$, $d\mathbb{\bar{P}=}Z_{S}d\mathbb{P}$ on $\mathcal{F}%
_{S}\cap \left\{ S<\infty \right\} .$ Since $Z_{S}=Z_{S\wedge T}=$ $\tilde{Z}%
_{S\wedge T}^{-1}$, and $\mathbb{P}\left( T<\infty \right) =\mathbb{\tilde{P}%
}\left( T<\infty \right) =1$ the measures $\mathbb{\bar{P}}$ and $\mathbb{%
\tilde{P}}$ coincide on $\mathcal{F}_{T}.$ Now the quadratic variation $%
\left\langle Y\right\rangle _{\cdot }=T\wedge \cdot $, and the integral in
the relevant transformed average energy condition is%
\begin{eqnarray*}
\mathbb{E}_{\mathbb{P}}\left[ \int_{0}^{T}Z_{s}ds\right] &=&\mathbb{E}_{%
\mathbb{P}}\left[ TZ_{T}\right] =\mathbb{E}_{\mathbb{\tilde{P}}}\left[ T%
\right] \\
&=&\mathbb{E}_{\mathbb{\tilde{P}}}\left[ W_{T}^{2}\right] \\
&=&\mathbb{\tilde{P}}\left( W_{T}=-1\right) +\mathbb{E}_{\mathbb{\tilde{P}}}%
\left[ W_{T}^{2};W_{T}\geq 1\right] \\
&=&\mathbb{\tilde{P}}\left( W_{T}=-1\right) +\sum_{n=1}^{\infty }\frac{n}{%
\left( n+1\right) ^{2}} \\
&=&\infty .
\end{eqnarray*}%
We now turn to the construction of $X$ and $\zeta .$ Let $\sigma
:[0,1]\rightarrow \left[ 0,\infty \right] $ be the time-change $\sigma
\left( t\right) =t\left( 1-t\right) ^{-1}.$ Let $X_{t}=Y_{\sigma \left(
t\right) }$ and $\zeta _{t}=Z_{\sigma \left( t\right) .}$\ Then $X,e^{\frac{1%
}{2}X}$ and $\zeta $ inherit, respectively, the local martingale, the
submartingale and the uniformly integrable martingale properties of $Y,e^{%
\frac{1}{2}Y}$ and $Z$ though with respect to the filtration $\left\{ 
\mathcal{F}_{\sigma \left( t\right) }\right\} _{0\leq t<1}.$ Set $S=T\left(
1+T\right) ^{-1};$ that is, $\sigma \left( S\right) =T.$ then the quadratic
variation%
\begin{equation*}
\,\left\langle X\right\rangle _{t}=T\wedge \sigma \left( t\right) =\frac{%
S\wedge t}{1-S\wedge t}=\int_{0}^{S\wedge t}\frac{dr}{\left( 1-r\right) ^{2}}%
.
\end{equation*}%
Furthermore%
\begin{equation*}
\mathbb{E}_{\mathbb{P}}\left[ \int_{0}^{S}\frac{\zeta _{r}}{\left(
1-r\right) ^{2}}dr\right] =\mathbb{E}_{\mathbb{P}}\left[ \int_{0}^{T}Z_{s}ds%
\right] =\infty .
\end{equation*}%
This completes the justification of the properties of the example.
\end{remark}

\begin{remark}
\label{remark}We record four observations:

\begin{enumerate}
\item The proof does not require the a priori assumption that $\mathbb{E}%
\left[ \int_{0}^{t}\left\vert H_{s}\right\vert ^{2}ds\right] <\infty .$
However observe that 
\begin{equation*}
\mathbb{E}\left[ \int_{0}^{t}Z_{s}\left\vert H_{s}\right\vert ^{2}ds\right] =%
\mathbb{E}\left[ \int_{0}^{t}\mathbb{E}\left[ Z_{t}|\mathcal{F}_{s}\right]
\left\vert H_{s}\right\vert ^{2}ds\right] =\mathbb{E}\left[
Z_{t}\int_{0}^{t}\left\vert H_{s}\right\vert ^{2}ds\right] .
\end{equation*}

\item If the Brownian motion $W$ is independent of $H$ then using the
sequence of stopping times $\left( \tau _{n}\right) _{n=1}^{\infty },$ $%
0\leq \tau _{n}\leq \tau _{n+1}$ \ by 
\begin{equation*}
\tau _{n}=\inf \left\{ t\geq 0:\left\vert H_{t}\right\vert \geq n\right\} ,
\end{equation*}%
we get that 
\begin{eqnarray*}
\mathbb{E}\left[ Z_{t\wedge \tau _{n}}|H\right] &=&\mathbb{E}\left[ \left.
\exp \left( \int_{0}^{t\wedge \tau _{n}}H_{s}^{\top }dW_{s}-\frac{1}{2}%
\int_{0}^{t\wedge \tau _{n}}\left\vert H_{s}\right\vert ^{2}ds\right)
\right\vert H\right] \\
&=&\exp \left( -\frac{1}{2}\int_{0}^{t\wedge \tau _{n}}\left\vert
H_{s}\right\vert ^{2}ds\right) \mathbb{E}\left[ \left. \exp \left(
\int_{0}^{t\wedge \tau _{n}}H_{s}^{\top }dW_{s}\right) \right\vert H\right]
=1.
\end{eqnarray*}%
In particular, the stopped process $Z^{\tau _{n}}$ is a martingale. Moreover 
\begin{equation*}
\mathbb{E}\left[ \left. \int_{0}^{t\wedge \tau _{n}}Z_{s}\left\vert
H_{s}\right\vert ^{2}ds\right\vert H\right] =\int_{0}^{t}\mathbb{E}\left[
\left. Z_{s\wedge \tau _{n}}\right\vert H\right] \left\vert H_{s\wedge \tau
_{n}}\right\vert ^{2}ds=\int_{0}^{t\wedge \tau _{n}}\left\vert
H_{s}\right\vert ^{2}ds.
\end{equation*}%
Hence, by an application of the monotone convergence theorem 
\begin{equation*}
\mathbb{E}\left[ \int_{0}^{t}Z_{s}\left\vert H_{s}\right\vert ^{2}ds\right] =%
\mathbb{E}\left[ \int_{0}^{t}\left\vert H_{s}\right\vert ^{2}ds\right] .
\end{equation*}%
By the same argument one can prove directly that $Z$ is a martingale under
the weaker condition (\ref{lsq}). This result is contained in Lemma 11.3.1
of \cite{kal}.

\item Assume that $\mathbb{E}\left[ A_{t}\right] \,<\infty $ for all $t\geq
0 $, then $\left( Z-1\right) $ is a zero-mean martingale and $\mathbb{E}%
\left[ \left( Z-1\right) _{t}^{\ast }\right] \,<1+$ $\mathbb{E}\left[
Z_{t}^{\ast }\right] \,<\infty $. Since \thinspace $\left\langle
Z-1\right\rangle _{t}=\int_{0}^{t}Z_{s}^{2}\left\vert H_{s}\right\vert
^{2}ds $ the Burkholder-Davis-Gundy inequalities gives%
\begin{equation*}
\mathbb{E}\left[ \left( \int_{0}^{t}Z_{s}^{2}\left\vert H_{s}\right\vert
^{2}ds\right) ^{1/2}\right] <\infty
\end{equation*}%
for all $t\geq 0.$

\item The finiteness of the transformed average energy does not imply that
the average energy itself is finite. The following example illustrates this.
Let $W=\left( W_{t}\right) _{0\leq t\leq 1}$ be a one-dimensional $\left( 
\mathcal{F}_{t}\right) _{0\leq t\leq 1}$ -adapted Brownian motion with $%
W_{0}=0$, and suppose that $\mathcal{F}_{0}$ carries a uniform $\left[ 0,1%
\right] $ random variable which is independent of $W.$ Then we will prove
there exists an $\left( \mathcal{F}_{t}\right) $-optional process $H=\left(
H_{t}\right) _{0\leq t\leq 1}$ such that the local martingale $Z$ given by 
\begin{equation*}
Z_{t}=\exp \left( \int_{0}^{t}H_{s}dW_{s}-\frac{1}{2}\int_{0}^{t}H_{s}^{2}ds%
\right)
\end{equation*}%
is a martingale on $\left[ 0,1\right] $ for which 
\begin{equation*}
\mathbb{E}\left[ \int_{0}^{1}Z_{s}H_{s}^{2}ds\right] <\infty \text{ and }%
\mathbb{E}\left[ \int_{0}^{1}H_{s}^{2}ds\right] =\infty .
\end{equation*}%
\qquad \qquad \qquad\ \ 

To construct $Z$ we will make use of the Gaussian martingale $%
B_{t}=\int_{0}^{t}\frac{1}{1-s}dW_{s}$ defined on $[0,1)$. We notice that $%
\left( 1-t\right) B_{t}$ is a Brownian bridge on $[0,1)$ and the related
process $V_{t}:=\exp \left[ B_{t}-\frac{t}{2\left( 1-t\right) }\right] $ is
just the martingale of densities on $\left( \mathcal{F}_{t}\right) _{0\leq
t<1}$ that turns $W$ into a Brownian bridge, cf. \cite{revuzyor}. $\ $But
the property we exploit is the existence of a Brownian motion $\bar{B}$ on $%
[0,\infty )$ such that $B_{t}=\bar{B}$ $_{\sigma \left( t\right) }$ wherein $%
\sigma \left( t\right) :=t\left( 1-t\right) ^{-1}.$ Let%
\begin{equation*}
X_{t}=\int_{0}^{t}\frac{V_{s}ds}{\left( 1-s\right) ^{2}}
\end{equation*}%
be defined on $\left[ 0,1\right] $ and introduce the sequence of stopping
times%
\begin{equation*}
T_{n}=\inf \left\{ t\geq 0:X_{t}=\frac{n\left( 1-t\right) }{n\left(
1-t\right) +t}\right\} .
\end{equation*}%
Since $X_{\cdot }$ is non-negative and increasing with $X_{0}=0$ and the
function $t\mapsto \frac{n\left( 1-t\right) }{n\left( 1-t\right) +t}$ is
strictly decreasing to $0,$ each $T_{n}$ is strictly less than one.
Furthermore the sequence $\left( T_{n}\right) _{n=1}^{\infty }$ increases to
a limit $T_{\infty }\leq 1.$ We need to prove that $\mathbb{P}\left(
T_{\infty }=1\right) >0.$ Using the fact that 
\begin{equation*}
\lim_{n\rightarrow \infty }\frac{n\left( 1-t\right) }{n\left( 1-t\right) +t}%
=1\text{ for all }t<1,
\end{equation*}%
it follows that $\mathbb{P}\left( T_{\infty }=1\right) =\mathbb{P}\left(
X_{1}<1\right) .$ However, 
\begin{eqnarray*}
X_{1} &=&\int_{0}^{1}\frac{1}{\left( 1-t\right) ^{2}}\exp \left[ B_{t}-\frac{%
t}{2\left( 1-t\right) }\right] \text{d}t \\
&=&\int_{0}^{\infty }\exp \left( \bar{B}_{t}-\frac{1}{2}s\right) \text{d}s
\end{eqnarray*}%
and it is result of Dufresne \cite{duf} (see also Yor \cite{yor}, page 15)
that this latter integral is distributed as twice the inverse of a standard
exponential random variable $Y$. In particular $\mathbb{P}\left(
X_{1}<1\right) =\mathbb{P}\left( Y>2\right) =e^{-2},$ from which it follows
that $\mathbb{P}\left( T_{\infty }=1\right) >0$ and, therefore, $\mathbb{E}%
\left[ \frac{T_{\infty }}{1-T_{\infty }}\right] =\infty .$ The monotone
convergence theorem implies that the sequence 
\begin{equation*}
m\left( n\right) :=\mathbb{E}\left[ \frac{T_{n}}{1-T_{n}}\right] \uparrow
\infty \text{ as }n\rightarrow \infty .
\end{equation*}%
Let $U$ be the uniform $\left[ 0,1\right] $ random variable on $\mathcal{F}%
_{0}$ referred to earlier. We can construct, as a measurable function of $U$%
, an integer random variable $N$ satisfying%
\begin{equation*}
\mathbb{E}\left[ m\left( N\right) \right] =\infty .
\end{equation*}%
If $T$ denotes the stopping time $T_{N}$ \ then $T<1,$ but also 
\begin{equation*}
\mathbb{E}\left[ \frac{T}{1-T}\right] =\mathbb{E}\left[ m\left( N\right) %
\right] =\infty .
\end{equation*}%
Finally we take $Z_{t}:=M_{t\wedge T}$ on $\left[ 0,1\right] $ and define $H$
to be the corresponding integrand 
\begin{equation*}
H_{t}=\left\{ 
\begin{array}{cc}
\left( 1-t\right) ^{-1} & \text{on }[0,T) \\ 
0 & \text{on }\left[ T,1\right]%
\end{array}%
\right. ,
\end{equation*}%
whereupon we have%
\begin{eqnarray*}
\mathbb{E}\left[ \int_{0}^{1}Z_{s}H_{s}^{2}ds\right] &=&\mathbb{E}\left[
X_{T}\right] =\mathbb{E}\left[ \frac{N\left( 1-T\right) }{N\left( 1-T\right)
+T}\right] <1,\text{ but} \\
\mathbb{E}\left[ \int_{0}^{1}H_{s}^{2}ds\right] &=&\mathbb{E}\left[
\int_{0}^{T}\frac{1}{\left( 1-t\right) ^{2}}dt\right] =\mathbb{E}\left[ 
\frac{T}{1-T}\right] =\infty
\end{eqnarray*}%
as required.
\end{enumerate}
\end{remark}

\begin{remark}
For any $K>0$, it is possible to decompose the local martingale $M$ as 
\begin{equation*}
M=M^{sq,K}+M^{d,K},
\end{equation*}%
where $M^{sq,K}$ is a locally square-integrable martingale with jumps
bounded by a constant $K$\ and $M^{d,K}$ is a purely discontinuous local
martingale with locally integrable total variation, with jumps greater than $%
K$, in such a manner that the quadratic variation process $\left[
M^{sq,K},M^{d,K}\right] $ is identically equal to 0. In what follows we will
discard the dependence on the constant $K$ in the notation for $M^{sq,K}$
and $M^{d,K}$. The first part of the statement is essentially Proposition
I.4.17 in \cite{js} while the second part follows from Theorem I.4.18 of the
same reference, i.e., from the classical decomposition of the local
martingale $M^{sq}$ into its continuous and purely discontinuous parts 
\begin{equation*}
M^{sq}=M^{sq,c}+M^{sq,d}.
\end{equation*}%
We have that 
\begin{equation*}
\left[ M^{sq},M^{d}\right] =\left[ M^{sq,c},M^{d}\right] +\left[
M^{sq,d},M^{d}\right] =0
\end{equation*}%
as $\left[ M^{sq,c},M^{d}\right] $ is null since it is the quadratic
variation between a continuous and a purely discontinuous martingale and
since $\left[ M^{sq,d},M^{d}\right] $ since it is the quadratic variation of
two purely discontinuous martingales with no jumps occurring at the same
time.
\end{remark}

For the following proposition, we introduce a positive $\mathcal{F}_{t}$%
-adapted cadlag semimartingale of the form 
\begin{equation*}
U_{t}=U_{0}+\int_{0}^{t}a_{s}ds+M_{t},
\end{equation*}%
where $a$ is a measurable $\mathcal{F}_{t}$-adapted process and $M$ is a
local $\mathcal{F}_{t}$-martingale null at zero\footnote{%
We will use the notation $\left[ \cdot ,\cdot \right] $\ to denote the
quadratic variation process of two local martingales. In addition, we will
use the notation $\left\langle \cdot ,\cdot \right\rangle $ to denote the
predictable quadratic variation process of two locally square integrable
martingales. The two processes coincide if one of the martingales is
continuous. For further details see, for example, Chapter 4 of \cite{rw}.}.
We also assume that $E\left[ U_{0}\right] <\infty $ and additionally that
the quadratic variation processes $\left\langle W^{i},M\right\rangle $ $%
i=1,...,m$ are absolutely continuous. In particular, there exists a
measurable $m$-dimensional $\mathcal{F}_{t}$-adapted process $%
N=(N^{i})_{i=1}^{m}$ such that 
\begin{equation*}
\left\langle W^{i},M\right\rangle _{t}=\int_{0}^{t}N_{s}^{i}ds,~~~\ \ t\geq
0,~~~~i=1,...,m.
\end{equation*}%
Moreover we will assume that there exists a positive constant $c$ such that 
\begin{equation}
\max \left( \left\vert a_{t}\right\vert ,\left\vert N_{t}\right\vert
^{2}\right) \leq c\max \left( U_{t},U_{t-}\right) ,~\ ~~~\ \ t\geq 0.
\label{control}
\end{equation}

\begin{proposition}
\label{localboundedness}Assume that the $\mathcal{F}_{t}$-adapted measurable
process $H=(H^{i})_{i=1}^{d}$ satisfies the inequality%
\begin{equation}
\left\vert H_{t}\right\vert ^{2}\leq c\max \left( U_{t},U_{t-}\right) ~~~~~\
\ t\geq 0.  \label{condonhandu}
\end{equation}%
Then the functions $t\rightarrow \mathbb{E}\left[ Z_{t}\left\vert
H_{t}\right\vert ^{2}\right] $, $t\rightarrow \mathbb{E}\left[ \left\vert
H_{t}\right\vert ^{2}\right] $ are locally bounded. In particular Lemma \ref%
{le:filterEqns:genuineMgale} allows us to deduce that the process $Z$ is a $%
H^{1}\left( \mathbb{P}\right) $ martingale.
\end{proposition}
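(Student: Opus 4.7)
The plan is to reduce both local boundedness claims to bounds on $t\mapsto \mathbb{E}[U_t]$ and $t\mapsto \mathbb{E}[Z_t U_t]$ using the hypothesis $|H_t|^2\leq c\max(U_t,U_{t-})$. Since $U$ is c\`adl\`ag, it jumps at most countably often on $[0,t]$, so $\max(U_s,U_{s-})=U_s$ for Lebesgue-a.e.\ $s$; in particular $\mathbb{E}[|H_t|^2]$ (resp.\ $\mathbb{E}[Z_t|H_t|^2]$) is locally bounded once $\mathbb{E}[U_t]$ (resp.\ $\mathbb{E}[Z_tU_t]$) is, apart from the issue of exceptional jump times $t$, where one can replace $U_t$ with $U_{t-}$ at the cost of applying the same Gronwall argument to $U_{t-}$ as well. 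Once local integrability of $s\mapsto \mathbb{E}[Z_s|H_s|^2]$ on $[0,t]$ is proved, Lemma~\ref{le:filterEqns:genuineMgale} and Corollary~\ref{corgenuinemart} immediately upgrade $Z$ to an $H^1(\mathbb{P})$-martingale.

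To bound $\mathbb{E}[U_t]$, choose a localising sequence $(\sigma_n)$ reducing $M$, so that $M^{\sigma_n}$ is a true martingale. Taking expectations in $U_{t\wedge\sigma_n}=U_0+\int_0^{t\wedge\sigma_n}a_s\,ds+M_{t\wedge\sigma_n}$ and using (\ref{control}) gives, with $g_n(t):=\mathbb{E}[U_{t\wedge\sigma_n}]$,
\begin{equation*}
g_n(t)\leq \mathbb{E}[U_0]+c\int_0^t\mathbb{E}\bigl[\max(U_{s},U_{s-})\mathbf{1}_{\{s\leq\sigma_n\}}\bigr]\,ds\leq \mathbb{E}[U_0]+c\int_0^tg_n(s)\,ds,
\end{equation*}
where the last inequality uses the Lebesgue-a.e.\ identity $\max(U_s,U_{s-})=U_s$ together with $U_s\mathbf{1}_{\{s\leq\sigma_n\}}\leq U_{s\wedge\sigma_n}$. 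Gronwall then yields $g_n(t)\leq \mathbb{E}[U_0]e^{ct}$, and Fatou (as $n\to\infty$) produces $\mathbb{E}[U_t]\leq \mathbb{E}[U_0]e^{ct}$.

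For the weighted version, apply Itô's product formula to $Z_tU_t$. Because $W$ is continuous the exponential local martingale $Z$ is itself continuous, hence $Z_{s-}=Z_s$, and the only contribution to $[Z,U]$ comes from $\langle Z,W\rangle$ against $\langle W,M\rangle$. This yields
\begin{equation*}
Z_tU_t=U_0+\int_0^tZ_s(a_s+H_s^{\top}N_s)\,ds+\int_0^tZ_s\,dM_s+\int_0^tU_{s-}Z_sH_s^{\top}\,dW_s.
\end{equation*}
Using (\ref{control}) together with $|H_s|^2,|N_s|^2\leq c\max(U_s,U_{s-})$ and the Cauchy--Schwarz bound $|H_s^{\top}N_s|\leq\tfrac{1}{2}(|H_s|^2+|N_s|^2)$, the drift integrand is dominated by $C\, Z_s\max(U_s,U_{s-})$ for some $C=C(c)$. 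Localising so that the two stochastic integrals become true martingales of mean zero, taking expectations, and replacing $\max(U_s,U_{s-})$ by $U_s$ inside the time integral as above, gives for $\tilde g_n(t):=\mathbb{E}[Z_{t\wedge\sigma_n}U_{t\wedge\sigma_n}]$
\begin{equation*}
\tilde g_n(t)\leq \mathbb{E}[U_0]+C\int_0^t\tilde g_n(s)\,ds,
\end{equation*}
so Gronwall and Fatou produce $\mathbb{E}[Z_tU_t]\leq \mathbb{E}[U_0]e^{Ct}$; consequently $\mathbb{E}[Z_t|H_t|^2]\leq c\,\mathbb{E}[U_0]e^{Ct}$ is locally bounded, and $\int_0^t\mathbb{E}[Z_s|H_s|^2]\,ds<\infty$.

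The main delicate point is the localisation: one must choose a single sequence $(\sigma_n)$ that simultaneously reduces the local martingale $M$ and the two stochastic integrals $\int Z\,dM$ and $\int UZH^{\top}\,dW$, and verify that the contributions of $U_{s-}$ versus $U_s$ inside the $ds$-integrals are inconsequential (they differ only on a Lebesgue-null set of jump times). Once this is handled, the proof is a textbook Gronwall-plus-Fatou argument, and the invocation of Lemma~\ref{le:filterEqns:genuineMgale} finishes the proposition.
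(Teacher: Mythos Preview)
Your strategy---It\^{o} product formula for $ZU$, domination of the drift by $CZ_s\max(U_s,U_{s-})$, Gronwall along a localising sequence, then a limit---is exactly the paper's approach, and your handling of the $\max(U_s,U_{s-})$ versus $U_s$ issue via the countability of jump times is fine. However, the localisation, which you yourself flag as ``the main delicate point'', is not actually carried out, and the sequence you do specify is inadequate.

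Concretely, in your first Gronwall step you take $(\sigma_n)$ to be any sequence reducing $M$. With that choice alone there is no reason for $g_n(t)=\mathbb{E}[U_{t\wedge\sigma_n}]$ to be finite: you only know $\mathbb{E}[M_{t\wedge\sigma_n}]=0$ and $\mathbb{E}[U_0]<\infty$, but $\mathbb{E}\bigl[\int_0^{t\wedge\sigma_n}|a_s|\,ds\bigr]$ is controlled only by $c\int_0^t g_n(s)\,ds$, which is the very quantity you are trying to bound. Gronwall's inequality requires the function to be locally bounded (or at least finite) to begin with; as written, the argument is circular. The same problem recurs for $\tilde g_n(t)=\mathbb{E}[Z_{t\wedge\sigma_n}U_{t\wedge\sigma_n}]$, where in addition you need $\int_0^{\cdot}Z_s\,dM_s$ and $\int_0^{\cdot}U_{s-}Z_sH_s^{\top}\,dW_s$ to be genuine martingales---again not guaranteed by merely reducing $M$.

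The paper fixes this by building the localising sequence explicitly: it takes $T_n$ reducing the decomposition $M=M^{sq}+M^{d}$ (so that $M^{sq,T_n}$ is square integrable and $M^{d,T_n}$ has integrable variation), and then sets
\[
S_n=\inf\Bigl\{t\geq 0:\max\Bigl\{Z_t,\int_0^t|a_s|\,ds,\;U_{t-}\Bigr\}\geq n\Bigr\}\wedge T_n.
\]
The left-continuity of the three processes in the braces makes them bounded by $n$ up to time $S_n$, so all stopped stochastic integrals are honest martingales and $\mathbb{E}[Z_{t\wedge S_n}U_{t\wedge S_n}]<\infty$; Gronwall then applies to $t\mapsto\mathbb{E}[Z_tU_t\mathbf{1}_{\{t\leq S_n\}}]$ legitimately, and monotone convergence (everything is nonnegative) gives the uniform bound $e^{2ct}\mathbb{E}[U_0]$. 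The bound on $\mathbb{E}[U_t]$ follows by the special case $H\equiv 0$, $Z\equiv 1$. Incorporating this explicit $S_n$ into your argument closes the gap.
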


\begin{proof}
Let $\left( T_{n}\right) _{n>0}$ be a localizing sequence of stopping times
such that the stopped process $\left( M_{T_{n}\wedge \cdot }^{sq}\right) $
is a square integrable martingale and the process $\left( M_{T_{n}\wedge
\cdot }^{d}\right) $ is a martingale with integrable total variation $%
Var\left( M^{d}\right) _{T_{n}\wedge \cdot }$. Now introduce the localizing
sequence $\left( S_{n}\right) _{n>0}$ where%
\begin{equation*}
S_{n}=\inf \left\{ t\geq 0\left\vert \max \left\{
Z_{t},\int_{0}^{t}\left\vert a_{s}\right\vert ds,U_{t-}\right\} \geq
n\right. \right\} \wedge T_{n}.
\end{equation*}%
Note that the left continuity of the processes listed in the inner brackets
implies that these processes, when stopped at \ $S_{n}$ are bounded by $n$.
Consider now the evolution equation for $ZU$, that is%
\begin{equation}
Z_{t}U_{t}=U_{0}+\int_{0}^{t}Z_{s}\left( a_{s}+H_{s}^{\top }N_{s}\right)
ds+\int_{0}^{t}Z_{s}\left( H_{s}^{\top }dW_{s}+dM_{s}^{sq}+dM_{s}^{d}\right)
.  \label{eqforzandu}
\end{equation}%
It follows that the expected value of $Z_{t}U_{t}$ is controlled by the sum
of the expected values of the six terms on the right hand side of (\ref%
{eqforzandu}). The stochastic integral terms in (\ref{eqforzandu}), when
stopped at $S_{n}$ become genuine martingales. They can be controlled as
follows:%
\begin{eqnarray*}
\mathbb{E}\left[ \left( \int_{0}^{t\wedge S_{n}}Z_{s}U_{s-}H_{s}^{\top
}dW_{s}\right) ^{2}\right] &=&\mathbb{E}\left[ \int_{0}^{t\wedge
S_{n}}Z_{s}^{2}U_{s-}^{2}\left\vert H_{s}\right\vert ^{2}ds\right] \\
&\leq &cn^{4}\mathbb{E}\left[ \int_{0}^{t\wedge S_{n}}\max \left(
U_{t},U_{t-}\right) ds\right] \leq cn^{5}t.
\end{eqnarray*}%
Here we have used the fact that, for all $t\geq 0,$ $\int_{0}^{t}P\left(
U_{s}\neq U_{s-}\right) ds=0.$ We also have that%
\begin{eqnarray*}
\mathbb{E}\left[ \left( \int_{0}^{t\wedge S_{n}}Z_{s}dM_{s}^{sq}\right) ^{2}%
\right] &=&\mathbb{E}\left[ \int_{0}^{t\wedge S_{n}}Z_{s}^{2}d\left\langle
M^{sq}\right\rangle _{s}\right] \leq n^{2}\mathbb{E}\left[ \left\langle
M^{sq}\right\rangle _{t\wedge S_{n}}\right] <\infty \\
\mathbb{E}\left[ \left\vert \int_{0}^{t\wedge
S_{n}}Z_{s}dM_{s}^{d}\right\vert \right] &\leq &n\mathbb{E}\left[ Var\left(
M^{d}\right) _{S_{n}\wedge t}\right] <\infty .
\end{eqnarray*}%
By taking the expectation of both sides in (\ref{eqforzandu}) stopped at $%
t\wedge S_{n}$, we deduce that 
\begin{eqnarray*}
\mathbb{E}\left[ Z_{t}U_{t}1_{\left\{ t\leq S_{n}\right\} }\right] &\leq &%
\mathbb{E}\left[ Z_{t\wedge S_{n}}U_{t\wedge S_{n}}\right] \\
&=&\mathbb{E}\left[ U_{0}\right] +\mathbb{E}\left[ \int_{0}^{t\wedge
S_{n}}Z_{s}\left( a_{s}+H_{s}^{\top }N_{s}\right) ds\right] \\
&\leq &\mathbb{E}\left[ U_{0}\right] +2c\mathbb{E}\left[ \int_{0}^{t}Z_{s}%
\max \left( U_{s},U_{s-}\right) 1_{\left\{ s\leq S_{n}\right\} }ds\right] \\
&\leq &\mathbb{E}\left[ U_{0}\right] +2c\int_{0}^{t}\mathbb{E}\left[
Z_{s}U_{s}1_{\left\{ s\leq S_{n}\right\} }\right] ds\leq e^{2ct}\mathbb{E}%
\left[ U_{0}\right] <\infty .
\end{eqnarray*}%
Note that the last inequality follows from Gronwall's lemma. Since $%
\lim_{n\rightarrow \infty }S_{n}=\infty $, we can then deduce by the
monotone convergence theorem that, for all $t>0,$%
\begin{equation}
\sup_{s\in \left[ 0,t\right] }\mathbb{E}\left[ Z_{s}U_{s}\right] \leq e^{2ct}%
\mathbb{E}\left[ U_{0}\right] .  \label{almost}
\end{equation}%
The local boundedness of $t\rightarrow \mathbb{E}\left[ Z_{t}\left\vert
H_{t}\right\vert ^{2}\right] $ follows from (\ref{condonhandu}) and (\ref%
{almost}). Similarly we show that for all $t>0,$%
\begin{equation*}
\sup_{s\in \left[ 0,t\right] }\mathbb{E}\left[ U_{s}\right] <\infty .
\end{equation*}%
by using the above argument with $H=0$ for all $t\geq 0$ (and therefore $%
Z_{t}=1$). This in turn implies the local boundedness of the functions $%
t\rightarrow \mathbb{E}\left[ \left\vert H_{t}\right\vert ^{2}\right] $.
\end{proof}

\section{Two Particular Cases\label{examples}}

\subsection{The signal is a jump-diffusion process}

We continue to assume that the observation process follows (\ref%
{eq:filterEq:observation}), and suppose that $X_{t}=(X_{t}^{i})_{i=1}^{d},$
for all $t\geq 0,$ is a cadlag solution of a $d$-dimensional stochastic
differential equation. This is driven by a triplet ($V$,$W,L)$ comprising a $%
p$-dimensional Brownian motion $V=(V^{j})_{j=1}^{p}$, the $m$-dimensional
Brownian motion $W=(W^{j})_{j=1}^{m}$ driving the observation process $Y,$
and an $\mathbb{R}^{r}$-valued L\'{e}vy process $L=(L^{j})_{j=1}^{r}$ with
no centred Gaussian component and with L\'{e}vy measure $F$ such that $%
F\left( \left\{ 0\right\} \right) =0.$ viz. 
\begin{equation}
X_{t}^{i}=X_{0}^{i}+\int_{0}^{t}f^{i}(X_{s-})\,\mathrm{d}s+\sum_{j=1}^{p}%
\int_{0}^{t}\sigma ^{ij}(X_{s-})\,\mathrm{d}V_{s}^{j}+\sum_{k=1}^{m}%
\int_{0}^{t}\bar{\sigma}^{ik}(X_{s-})\,\mathrm{d}W_{s}^{k}+\sum_{l=1}^{r}%
\int_{0}^{t}\tilde{\sigma}^{il}(X_{s-})\,\mathrm{d}L_{s}^{l},\quad \quad
\label{ss1}
\end{equation}%
for $i=1,\ldots ,d.$ We write $f=(f^{i})_{i=1}^{d}:\mathbb{R}^{d}\rightarrow 
\mathbb{R}^{d}$ , $\sigma =(\sigma ^{ij})_{i=1,\ldots ,d,j=1,\ldots ,p}:%
\mathbb{R}^{d}\rightarrow \mathbb{R}^{d\times p}$, $\bar{\sigma}=(\bar{\sigma%
}^{ij})_{i=1,\ldots ,d,j=1,\ldots ,m}:\mathbb{R}^{d}\rightarrow \mathbb{R}%
^{d\times m}$ and $\tilde{\sigma}=(\tilde{\sigma}^{ij})_{i=1,\ldots
,d,j=1,\ldots ,r}:\mathbb{R}^{d}\rightarrow \mathbb{R}^{d\times r}.$

We recall that a function $g:E\rightarrow F$ between two normed spaces $%
\left( E,\left\vert \left\vert \cdot \right\vert \right\vert _{E}\right) $
and $\left( F,\left\vert \left\vert \cdot \right\vert \right\vert
_{F}\right) $ has \textit{at most linear growth} if there exists $K<\infty $
such that 
\begin{equation*}
\left\vert \left\vert g\left( e\right) \right\vert \right\vert _{F}\leq
K\left( 1+\left\vert \left\vert e\right\vert \right\vert _{E}\right)
\end{equation*}%
for all $e\in E.$ We record the assumptions to be made on the coefficients
in the equation (\ref{ss1}).

\begin{condition}
\label{lip}We assume $f,$ $\sigma ,$ $\bar{\sigma}$ and $\tilde{\sigma}$ are
Borel and have at most linear growth.
\end{condition}

We will use $\mu $ to denote the Poisson random measure associated with $L$,
i.e. for every $t\geq 0$ \ and $A\in \mathcal{B}\left( \mathbb{R}%
^{r}\setminus \left\{ 0\right\} \right) $ the random measure $\mu \left(
t,\cdot \right) $ defined by 
\begin{equation*}
\mu \left( t,A\right) :=\sum_{0\leq s\leq t}1_{A}\left( \Delta L_{s}\right) .
\end{equation*}%
We let $\nu \left( t,\cdot \right) :=F\left( \cdot \right) t=\mathbb{E}\left[
\mu \left( 1,\cdot \right) \right] t,$ where $F\left( \cdot \right) $ is the
L\'{e}vy measure of $L,$ and denote the compensated measure by $\tilde{\mu}%
\left( t,A\right) =\mu \left( t,A\right) -\nu \left( t,A\right) .$ $L$ then
has a L\'{e}vy-Ito decomposition of the form%
\begin{equation}
L_{t}=at+\int_{0<\left\vert \rho \right\vert <1}\rho \tilde{\mu}\left( t,%
\mathrm{d}\rho \right) +\int_{\left\vert \rho \right\vert \geq 1}\rho \mu
\left( t,\mathrm{d}\rho \right) .  \label{LI}
\end{equation}

\begin{condition}
\label{sq int}Let $L=\left( L_{t}\right) _{t\geq 0}$ be a L\'{e}vy process
with L\'{e}vy measure $F.$We assume the square integrability condition 
\begin{equation*}
\int_{\left\vert \rho \right\vert \geq 1}\rho ^{2}F\left( \mathrm{d}\rho
\right) <\infty .
\end{equation*}
\end{condition}

\begin{remark}
\label{LIM}Whenever this condition is in force we have that 
\begin{equation}
\int_{\left\vert \rho \right\vert \geq 1}\rho F\left( \mathrm{d}\rho \right)
<\infty \text{ for every }t\geq 0,  \label{1st moment}
\end{equation}%
and hence the L\'{e}vy-Ito decomposition (\ref{LI}) may be rewritten as 
\begin{equation*}
L_{t}=bt+\int_{\mathbb{R}^{r}\setminus \left\{ 0\right\} }\rho \tilde{\mu}%
\left( t,\mathrm{d}\rho \right) ,
\end{equation*}%
where $b:=a-\int_{\left\vert \rho \right\vert \geq 1}\rho F\left( \mathrm{d}%
\rho \right) .$
\end{remark}

We continue to assume the dynamics for the observation process described in (%
\ref{eq:filterEq:observation}), and we now assume that (\ref{1st moment})
holds. We can restate this example in the language of Section \ref%
{sectionremark} by noticing that the process $\bar{X}=\left( X,Y\right) $ is
a solution to a martingale problem, with generator $A$ now given by%
\begin{eqnarray*}
A\phi \left( \bar{x}\right) &=&A\phi \left( x,y\right) \\
&=&\mathcal{L}\phi \left( x,y\right) +\,\int_{\mathbb{R}^{r}\setminus
\left\{ 0\right\} }\left[ \phi \left( x+\tilde{\sigma}(x)\eta ,y\right)
-\phi \left( x,y\right) -\sum_{i=1}^{d}\sum_{l=1}^{r}\frac{\partial \phi
\left( x,y\right) }{\partial x_{i}}\tilde{\sigma}^{il}(x)\eta ^{l}\right]
F\left( d\eta \right)
\end{eqnarray*}%
where 
\begin{equation*}
\mathcal{L}=\sum_{i=1}^{d}\tilde{f}^{i}(x)\,\frac{\partial }{\partial x_{i}}%
+\sum_{k=1}^{m}h^{k}(x,y)\,\frac{\partial }{\partial y_{k}}+\frac{1}{2}%
\sum_{i,j=1}^{d}\left( a^{ij}\left( x\right) +\bar{a}^{ij}\left( x\right)
\right) \frac{\partial ^{2}}{\partial x_{i}\partial x_{j}}\,+\frac{1}{2}%
\sum_{k=1}^{m}\frac{\partial ^{2}}{\partial x_{k}^{2}},
\end{equation*}%
with $\tilde{f}^{i}(x):=f^{i}(x)+b^{i},$ and $a=(a^{ij})_{i,j=1,\ldots ,d}:%
\mathbb{R}^{d}\rightarrow \mathbb{R}^{d\times d},\bar{a}=(\bar{a}%
^{ij})_{i,j=1,\ldots ,d}:\mathbb{R}^{d}\rightarrow \mathbb{R}^{d\times d}$
are the matrix-valued function defined respectively as%
\begin{equation*}
a^{ij}=\frac{1}{2}\sum_{k=1}^{p}\sigma ^{ik}\sigma ^{jk}=\frac{1}{2}\left(
\sigma \sigma ^{\top }\right) ^{ij}\text{ and }a^{ij}=\frac{1}{2}%
\sum_{k=1}^{m}\sigma ^{ik}\sigma ^{jk}=\frac{1}{2}\left( \bar{\sigma}\bar{%
\sigma}^{\top }\right) ^{ij}
\end{equation*}%
for all $i,j=1,\ldots ,d.$

To ensure the filtering equations described in Section \ref{section6} can be
applied to this example, we wish to establish that the functions $\mathbb{E}%
\left[ Z_{\cdot }\left\vert h\left( X_{\cdot }\right) \right\vert ^{2}\right]
$ and $\mathbb{E}\left[ \left\vert h\left( X_{\cdot }\right) \right\vert ^{2}%
\right] $\ are locally bounded.

\begin{corollary}
Assume the coefficients in (\ref{ss1}) satisfy Conditions \ref{lip} and that 
$\bar{\sigma}$ is uniformly bounded. Let $X_{t}=(X_{t}^{i})_{i=1}^{d}$
denote a $d-$dimensional jump-diffusion process which solves (\ref{ss1}) for
all $t\geq 0.$ Suppose the driving L\'{e}vy process $L$ has a L\'{e}vy
measure $F$ which satisfies $F\left( \left\{ 0\right\} \right) =0$ and has
no Gaussian part. Assume Condition \ref{sq int} and further suppose that $%
X_{0},V,W$ and $L$ are independent with $\mathbb{E}\left[ \left\vert
X_{0}\right\vert ^{2}\right] <\infty .$ Let $h:\mathbb{R}^{d}\rightarrow 
\mathbb{R}^{m}$ be any Borel measurable function for which there exists $K>0$
such that for all $x\in \mathbb{R}^{d}$ 
\begin{equation*}
\left\vert h\left( x\right) \right\vert \leq K\left( 1+\left\vert
x\right\vert \right) ,
\end{equation*}%
and let $Z=\left( Z_{t}\right) _{t\geq 0}$ be the positive local martingale
which solves $Z_{t}=1+\int_{0}^{t}Z_{s}h\left( X_{s}\right) ^{T}dW_{s}.$
Then $\mathbb{E}\left[ Z_{\cdot }\left\vert h\left( X_{\cdot }\right)
\right\vert ^{2}\right] $ and $\mathbb{E}\left[ \left\vert h\left( X_{\cdot
}\right) \right\vert ^{2}\right] $\ are locally bounded.
\end{corollary}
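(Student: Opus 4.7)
The plan is to invoke Proposition~\ref{localboundedness} with the auxiliary process
\[
U_t := 1 + |X_t|^2, \qquad H_t := h(X_t).
\]
Since $h$ has at most linear growth, $|H_t|^2 \leq K^2(1+|X_t|)^2 \leq 2K^2 U_t$, so hypothesis~(\ref{condonhandu}) is immediate, and $\mathbb{E}[|X_0|^2] < \infty$ yields $\mathbb{E}[U_0] < \infty$. Everything then reduces to producing a suitable semimartingale decomposition $U_t = U_0 + \int_0^t a_s\,ds + M_t$ and checking the control~(\ref{control}) on the drift $a$ and on the cross-variation density $N$ of $M$ with $W$.

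For this I would first use Remark~\ref{LIM}, available by virtue of Condition~\ref{sq int}, to recast the driving L\'evy process as $L_t = bt + \int_{\mathbb{R}^r\setminus\{0\}} \rho\,\tilde\mu(t,d\rho)$, so that (\ref{ss1}) becomes
\[
dX^i_s = \tilde f^i(X_{s-})\,ds + \sigma^{ij}(X_{s-})\,dV^j_s + \bar\sigma^{ik}(X_{s-})\,dW^k_s + \int_{\mathbb{R}^r\setminus\{0\}} \tilde\sigma^{il}(X_{s-})\rho^l\,\tilde\mu(ds,d\rho),
\]
with $\tilde f := f + \tilde\sigma b$ still of linear growth. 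Applying It\^o's formula to $|X|^2$, exploiting $|X_{s-}+\Delta X_s|^2 - |X_{s-}|^2 - 2X_{s-}^\top\Delta X_s = |\tilde\sigma(X_{s-})\Delta L_s|^2$, and compensating the jump-square sum yields $U_t = U_0 + \int_0^t a_s\,ds + M_t$ with
\[
a_s = 2X_{s-}^\top\tilde f(X_{s-}) + \operatorname{tr}\!\left((\sigma\sigma^\top + \bar\sigma\bar\sigma^\top)(X_{s-})\right) + \int_{\mathbb{R}^r\setminus\{0\}} |\tilde\sigma(X_{s-})\rho|^2\,F(d\rho),
\]
and $M$ equal to the sum of the local martingales produced by the stochastic integrals against $V$, $W$ and $\tilde\mu$.

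The verification of the hypotheses of Proposition~\ref{localboundedness} is then routine. Condition~\ref{lip} together with $\int_{\mathbb R^r\setminus\{0\}}|\rho|^2\,F(d\rho) < \infty$ (small jumps always integrable, large jumps by Condition~\ref{sq int}) delivers $|a_s| \leq c_1(1 + |X_{s-}|^2) = c_1 U_{s-}$. The independence of $W$ from $V$ and $L$, combined with the vanishing of the covariation between a continuous and a purely discontinuous local martingale, leaves $\langle W^k, M\rangle_t = \int_0^t 2 (X_{s-}^\top \bar\sigma(X_{s-}))^k\,ds$, so $N_s^k = 2(X_{s-}^\top\bar\sigma(X_{s-}))^k$; the uniform boundedness of $\bar\sigma$ then forces $|N_s|^2 \leq 4\|\bar\sigma\|_\infty^2 |X_{s-}|^2 \leq c_2 U_{s-}$. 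Together with the earlier bound on $|H_s|^2$, this is exactly (\ref{control}) and (\ref{condonhandu}), so Proposition~\ref{localboundedness} yields the local boundedness of $t\mapsto \mathbb E[Z_t|h(X_t)|^2]$ and $t\mapsto \mathbb E[|h(X_t)|^2]$.

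The main technical point is the bookkeeping in It\^o's formula for $|X|^2$ in the presence of a jump driver: the compensated sum of squared jumps contributes the drift piece $\int|\tilde\sigma(X_{s-})\rho|^2\,F(d\rho)$, whose linear-quadratic growth in $|X_{s-}|$ is precisely what Condition~\ref{sq int} guarantees. The uniform boundedness of $\bar\sigma$, rather than mere linear growth, is likewise essential: with linear growth alone the best bound on $|N_s|^2$ would be of order $U_{s-}^2$, which is incompatible with the one-power inequality required by~(\ref{control}).
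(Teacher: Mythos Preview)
Your proof is correct and follows essentially the same route as the paper: both take $U_t = 1 + |X_t|^2$, use Remark~\ref{LIM} to rewrite the SDE with a compensated jump integral, apply It\^o's formula to decompose $U$, and then verify the bounds~(\ref{control}) and~(\ref{condonhandu}) before invoking Proposition~\ref{localboundedness}. Your write-up is in fact slightly more careful on two points---you correctly record the modified drift as $\tilde f = f + \tilde\sigma b$ (the paper's $\tilde f = f + b$ is a dimensional slip), and you explain explicitly why only the $\bar\sigma\,dW$ term contributes to $\langle W,M\rangle$---and your closing remark on why uniform boundedness of $\bar\sigma$ (rather than linear growth) is indispensable is a welcome observation.
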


\begin{proof}
By exploiting Remark \ref{LIM} we can rewrite the SDE governing $X$ as 
\begin{equation*}
dX_{t}=\tilde{f}(X_{t-})\,\mathrm{d}t+\sigma (X_{t-})\,\mathrm{d}V_{t}+\bar{%
\sigma}(X_{t-})\,\mathrm{d}W_{t}+\int_{\mathbb{R}^{r}\setminus \left\{
0\right\} }\tilde{\sigma}(X_{t-})\rho \,\tilde{\mu}\left( \mathrm{d}t,%
\mathrm{d}\rho \right) ,
\end{equation*}%
where $\tilde{f}(x)=f(x)+b$ ($b$ is as given in Remark \ref{LIM}) is clearly
still locally Lipschitz. In order to apply the local boundedness lemma we
need to find a suitable process $U$ and the component processes in its
decomposition. To this end we let%
\begin{equation*}
U_{t}=1+\left\vert X_{t}\right\vert ^{2}.
\end{equation*}%
and use It\^{o}'s formula to obtain%
\begin{equation*}
U_{t}=1+\left\vert X_{0}\right\vert ^{2}+2\int_{0}^{t}X_{s-}^{T}\mathrm{d}%
X_{s}+\left[ X,X\right] _{t},
\end{equation*}%
where the quadratic variation $\left[ X,X\right] $ may be computed as%
\begin{eqnarray*}
\left[ X,X\right] _{t} &=&\int_{0}^{t}\text{tr}\left[ \sigma \left(
X_{s-}\right) ^{T}\sigma (X_{s-})+\bar{\sigma}\left( X_{s-}\right) ^{T}\bar{%
\sigma}\left( X_{s-}\right) \right] \mathrm{d}s+\int_{0}^{t}\int_{\mathbb{R}%
^{r}\setminus \left\{ 0\right\} }\text{tr}\left[ \tilde{\sigma}(X_{s-})\rho
\rho ^{T}\tilde{\sigma}(X_{s-})^{T}\right] \mu \left( \mathrm{d}s,\mathrm{d}%
\rho \right) \\
&=&\int_{0}^{t}\text{tr}\left[ \sigma \left( X_{s-}\right) ^{T}\sigma
(X_{s-})+\bar{\sigma}\left( X_{s-}\right) ^{T}\bar{\sigma}\left(
X_{s-}\right) \right] \mathrm{d}s+\sum_{0\leq s\leq t}\text{tr}\left[ \tilde{%
\sigma}(X_{s-})\Delta L_{s}\Delta L_{s}^{T}\tilde{\sigma}(X_{s-})^{T}\right]
.
\end{eqnarray*}%
Hence we may write $U$ as 
\begin{equation*}
U_{t}=U_{0}+\int_{0}^{t}a_{s}\mathrm{d}s+M_{t},
\end{equation*}%
where 
\begin{eqnarray*}
U_{0} &=&1+\left\vert X_{0}\right\vert ^{2} \\
a_{t} &=&2X_{t-}^{T}\tilde{f}(X_{t-})+\text{tr}\left[ \sigma \left(
X_{t-}\right) ^{T}\sigma (X_{t-})+\bar{\sigma}\left( X_{t-}\right) ^{T}\bar{%
\sigma}\left( X_{t-}\right) \right] \\
&&+\int_{0}^{t}\int_{\mathbb{R}^{r}\setminus \left\{ 0\right\} }\text{tr}%
\left[ \tilde{\sigma}(X_{s-})\rho \rho ^{T}\tilde{\sigma}(X_{s-})^{T}\right]
F\left( \mathrm{d}\rho \right) \mathrm{d}s
\end{eqnarray*}%
and $M$ is the local martingale%
\begin{equation*}
M_{t}=\int_{0}^{t}2X_{s-}^{T}\left[ \sigma (X_{s-})\,\mathrm{d}V_{s}+\bar{%
\sigma}(X_{s-})\,\mathrm{d}W_{s}\right] +\int_{0}^{t}\int_{\mathbb{R}%
^{r}\setminus \left\{ 0\right\} }\text{tr}\left[ \tilde{\sigma}(X_{s-})\rho
\rho ^{T}\tilde{\sigma}(X_{s-})^{T}\right] \tilde{\mu}\left( \mathrm{d}s,%
\mathrm{d}\rho \right) .
\end{equation*}

Condition \ref{lip} on $\tilde{f},\sigma ,\bar{\sigma}$ and $\tilde{\sigma}$
ensures the existence of $C>0$ such that 
\begin{equation*}
a_{t}\leq C\left( U_{t-}\vee U_{t}\right) ,
\end{equation*}%
moreover the boundedness of $\bar{\sigma}$ gives rise to the estimate 
\begin{equation*}
\left\vert \left\langle W,M\right\rangle _{t}^{\prime }\right\vert
=\left\vert \bar{\sigma}(X_{t-})X_{t-}\right\vert \leq K\left\vert
X_{t-}\right\vert \leq KU_{t-}^{1/2}.
\end{equation*}%
The result then follows from Proposition \ref{localboundedness}.\bigskip
\end{proof}

\begin{remark}
We may adapt this example to the case where $X$ be an $\left\{ \mathcal{F}%
_{t}\right\} $-adapted Markov process with values in a finite state space $I$
\end{remark}

\subsection{The \emph{change-detection} filtering problem.}

The following is a simple example with real-world applications which fits
within the above framework. The effect we try to capture is a sudden change
in the parameters of the model which describes the (stochastic) evolution of
the observed process. The following illustrates how such an effect might be
incorporated into the framework presented previously.

We assume that $Y$ is the real-valued process with dynamics%
\begin{equation*}
Y_{t}=\int_{0}^{t}\left( b_{0}+B1_{[T,\infty )}\left( s\right) \right)
Y_{s}ds+W_{t},
\end{equation*}%
where $W=\{W_{t},\ t\geq 0\}$ is a standard Brownian motion, $b_{0}$ a
constant and $B$ and $T$ independent random variables, which are also
independent of $W.\ $We also assume that $T\geq 0$ and that $\mathbb{E}\left[
e^{\lambda B^{2}}\right] <\infty $ for all $\lambda \in 
%TCIMACRO{\U{211d} }%
%BeginExpansion
\mathbb{R}
%EndExpansion
.$ The process $X_{t}=\left( X_{t}^{1},X_{t}^{2}\right) $ is then defined by 
\begin{equation*}
X_{t}^{1}=B\text{ and }X_{t}^{2}=I_{[T,\infty )}(t),\quad \ t\geq 0,
\end{equation*}%
whereupon the process $\bar{X}_{t}=\left( X_{t}^{1},X_{t}^{2},Y_{t}\right) $
is adapted to the filtration%
\begin{equation*}
\left\{ \mathcal{F}_{t}\right\} _{t\geq 0}:=\left\{ \mathcal{\sigma }\left(
B,I_{[T,\infty )}(s),W_{s}:s\leq t\right) \vee \mathcal{N}\right\} _{t\geq
0},
\end{equation*}%
where $\mathcal{N}$ is the class of null sets of the completed $\sigma $%
-field $\mathcal{F}_{\infty }=$ $\mathcal{\bar{\sigma}}\left(
B,T,W_{s},s<\infty \right) .$ We introduce the uniquely defined cadlag $%
\left( \mathcal{B}\left( 
%TCIMACRO{\U{211d} }%
%BeginExpansion
\mathbb{R}
%EndExpansion
\right) \times \mathcal{F}_{t}\right) -$optional processes%
\begin{eqnarray*}
\left( t,b,\omega \right) &\mapsto &H_{t}^{b}\left( \omega \right) =\left(
b_{0}+b1_{[T\left( \omega \right) ,\infty )}\left( t\right) \right)
Y_{t}^{b}\left( \omega \right) \\
\left( t,b,\omega \right) &\mapsto &Y_{t}^{b}\left( \omega \right)
=\int_{0}^{t}H_{s}^{b}\left( \omega \right) ds+W_{t}\left( \omega \right) ,
\end{eqnarray*}%
and set $Z_{t}^{b}:=\exp \left[ -\int_{0}^{t}H_{s}^{b}dW_{s}-\frac{1}{2}%
\int_{0}^{t}\left( H_{s}\right) ^{2}ds\right] .$ Notice that $B$ is $%
\mathcal{F}_{0}$-measurable, and hence the continuous process $\left(
Z_{t}^{B}\right) _{t\geq 0}$ is an $\left\{ \mathcal{F}_{t}\right\} $%
-adapted exponential local martingale. Again, as in the previous example, we
need to show that the functions $\mathbb{E}\left[ Z_{\cdot }^{B}\left(
H_{\cdot }^{B}\right) ^{2}\right] $ and $\mathbb{E}\left[ \left( H_{\cdot
}^{B}\right) ^{2}\right] $ are locally bounded. To do this, fix $b\in 
%TCIMACRO{\U{211d} }%
%BeginExpansion
\mathbb{R}
%EndExpansion
$ and take the terms $U_{t}$ and $c$ in Proposition \ref{localboundedness}
to be%
\begin{equation*}
U_{t}=U_{t}^{b}:=1+\left( Y_{t}^{b}\right) ^{2}\text{ and }c=c\left(
b\right) :=4+\left( b_{0}+b\right) ^{2}.
\end{equation*}%
Then we may verify that the conditions of Proposition \ref{localboundedness}%
\ are satisfied. It is immediate from its proof that the conclusion of
Proposition \ref{localboundedness} can be strengthened to give the estimate%
\begin{equation*}
\max \left\{ \mathbb{E}\left[ Z_{t}^{b}\left( H_{t}^{b}\right) ^{2}\right] ,%
\mathbb{E}\left[ \left( H_{t}^{b}\right) ^{2}\right] \right\} \leq
e^{c\left( b\right) t}\mathbb{E}\left[ U_{0}^{b}\right] =e^{c\left( b\right)
t}.
\end{equation*}%
Consequently%
\begin{equation*}
\mathbb{E}\left[ Z_{t}^{B}\left( H_{t}^{B}\right) ^{2}\right] =\mathbb{E}%
\left[ \left. \mathbb{E}\left[ Z_{t}^{b}\left( H_{t}^{b}\right) ^{2}\right]
\right\vert _{b=B}\right] \leq \mathbb{E}\left[ e^{c\left( B\right) t}\right]
\end{equation*}%
and similarly 
\begin{equation*}
\mathbb{E}\left[ \left( H_{t}^{B}\right) ^{2}\right] \leq \mathbb{E}\left[
e^{c\left( B\right) t}\right] .
\end{equation*}%
These inequalities, together with the moment condition on $B$, give the
required result.

\section{The Change of Probability Measure Method}

We now have all the ingredients required for introducing a probability
measure with respect to which the process $Y$ becomes a Brownian motion. We
return to the set-up of Section 2. Define $Z=\left( Z_{t}\right) _{t\geq 0}$
to be the exponential local martingale 
\begin{equation*}
Z_{t}=\exp \left( -\int_{0}^{t}h\left( \bar{X}_{s}\right) ^{\top }dW_{s}-%
\frac{1}{2}\int_{0}^{t}\left\vert h\left( \bar{X}_{s}\right) \right\vert
^{2}ds\right) .
\end{equation*}%
The change of probability measure method consists in modifying the
probability measure on $\Omega $ by means of Girsanov's theorem. As we
require $Z$ to be a martingale in order to construct the change of measure,
Lemma \ref{le:filterEqns:genuineMgale} suggests the following as a suitable
condition to impose upon $h$, 
\begin{equation}
\mathbb{E}\left[ \int_{0}^{t}Z_{s}\left\Vert h(\bar{X}_{s})\right\Vert ^{2}\,%
\mathrm{d}s\right] <\infty ,\quad \forall t>0.  \label{condonh}
\end{equation}

Let us assume that (\ref{condonh}) holds. Then, by Lemma \ref%
{le:filterEqns:genuineMgale}, $Z$ is a true martingale. Let $\mathbb{\tilde{P%
}}$ be the probability measure defined on the field $\bigcup_{0\leq t<\infty
}\mathcal{F}_{t}$ that is specified by its Radon--Nikodym derivative $Z_{t}$
on each $\mathcal{F}_{t}$ with respect to the corresponding trace of $%
\mathbb{P}$; that is, for each $t\geq 0$:%
\begin{equation*}
\left. \frac{\mathrm{d}\tilde{\mathbb{P}}}{\mathrm{d}\mathbb{P}}\right\vert
_{\mathcal{F}_{t}}=Z_{t}.
\end{equation*}%
$\mathbb{\tilde{P}}$ restricted to each $\mathcal{F}_{t}$ is equivalent to $%
\mathbb{P}$ since $Z_{t}$ is a positive random variable\footnote{%
Note that we have not defined $\mathbb{\tilde{P}}$ on $\mathcal{F}_{\infty }$%
, where $\mathcal{F}_{\infty }=\bigvee_{t=0}^{\infty }\mathcal{F}_{t}=\sigma
\left( \bigcup_{0\leq t<\infty }\mathcal{F}_{t}\right) .$}.

Let $\tilde{Z}=\{\tilde{Z}_{t},\ t\geq 0\}$ be the process defined as $%
\tilde{Z}_{t}=Z_{t}^{-1}$ for $t\geq 0$. Under $\tilde{\mathbb{P}}$, $\tilde{%
Z}_{t}$ satisfies the following stochastic differential equation, 
\begin{equation}
\mathrm{d}\tilde{Z}_{t}=\sum_{i=1}^{m}\tilde{Z}_{t}h^{i}(X_{t})\,\mathrm{d}%
Y_{t}^{i}  \label{ztilde1}
\end{equation}%
and since $\tilde{Z}_{0}=1$, 
\begin{equation}
\tilde{Z}_{t}=\exp \left( \sum_{i=1}^{m}\int_{0}^{t}h^{i}(X_{s})\,\mathrm{d}%
Y_{s}^{i}-\frac{1}{2}\sum_{i=1}^{m}\int_{0}^{t}h^{i}(X_{s})^{2}\,\mathrm{d}%
s\right) ,  \label{ztilde2}
\end{equation}%
then $\tilde{\mathbb{E}}[\tilde{Z}_{t}]=\mathbb{E}[\tilde{Z}_{t}Z_{t}]=1$.
So $\tilde{Z}$ is an $\mathcal{F}_{t}$-adapted martingale under $\tilde{%
\mathbb{P}}$ and%
\begin{equation*}
\left. \frac{\mathrm{d}\mathbb{P}}{\mathrm{d}\tilde{\mathbb{P}}}\right\vert
_{\mathcal{F}_{t}}=\tilde{Z}_{t}\quad \mathrm{\ for\ }t\geq 0.
\end{equation*}%
$\mathbb{P}$ and $\mathbb{\tilde{P}}$ are therefore equivalent on each $%
\mathcal{F}_{t}$ for $t\geq 0$.

\begin{proposition}
\label{prop:filterEqns:YisBM} If condition (\ref{condonh}) is satisfied,
then under $\tilde{\mathbb{P}}$ the observation process $Y$ is a Brownian
motion. Let $\varphi \in \mathcal{D}(A)$ have bounded derivatives in the $y$%
-direction, and let $\tilde{M}^{\varphi }$ denote the semimartingale 
\begin{equation*}
\tilde{M}_{t}^{\varphi }:=M_{t}^{\varphi }+\int_{0}^{t}\sum_{i=1}^{m}\left(
h^{i}B^{i}\varphi +\frac{\partial \varphi }{\partial y_{i}}\right) \left( 
\bar{X}_{t}\right) \mathrm{d}s.
\end{equation*}%
Then the stochastic integral $\int_{0}^{\cdot }\tilde{Z}_{s}d\tilde{M}%
_{s}^{\varphi }$ is a zero-mean martingale under $\tilde{\mathbb{P}}$.
\end{proposition}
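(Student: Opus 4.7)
I prove the two assertions separately. For the first, that $Y$ is a $\tilde{\mathbb{P}}$-Brownian motion, I invoke Girsanov's theorem directly: the assumption \eqref{condonh} together with Corollary~\ref{corgenuinemart} ensures that $Z$ is a true $\mathbb{P}$-martingale, so $\tilde{\mathbb{P}}$ is well defined and equivalent to $\mathbb{P}$ on each $\mathcal{F}_t$. Since $Y_t=W_t+\int_0^t h(\bar{X}_s)\,\mathrm{d}s$ and $Z$ is the Girsanov exponential that compensates the drift $h(\bar{X})$ against $W$, the standard theorem yields that $Y$ is a standard $m$-dimensional $\tilde{\mathbb{P}}$-Brownian motion.

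For the second assertion, set $N_t:=\int_0^t \tilde{Z}_s\,\mathrm{d}\tilde{M}^\varphi_s$. I would use the equivalence, via Bayes' formula $Z_s\,\tilde{\mathbb{E}}[N_t\mid\mathcal{F}_s]=\mathbb{E}[Z_tN_t\mid\mathcal{F}_s]$, that $N$ is a $\tilde{\mathbb{P}}$-martingale if and only if $ZN$ is a $\mathbb{P}$-martingale. The key algebraic step is to compute $ZN$ under $\mathbb{P}$ by It\^o's product rule: exploiting $Z\tilde{Z}\equiv 1$ together with the covariation assumption \eqref{covariance} (and the continuity of $W$, so that $\mathrm{d}[Z,M^\varphi]_t=\mathrm{d}\langle Z,M^\varphi\rangle_t=-Z_t\sum_i h^i(\bar{X}_t)\left(B^i\varphi+\frac{\partial\varphi}{\partial y_i}\right)(\bar{X}_t)\,\mathrm{d}t$), one obtains the cancellation
\begin{equation*}
Z_tN_t \;=\; M_t^\varphi+\int_0^t N_{s-}\,\mathrm{d}Z_s.
\end{equation*}
The drift added to $M^\varphi$ in the definition of $\tilde{M}^\varphi$ is precisely neutralised by the cross-variation term $\tilde{Z}\,\mathrm{d}[Z,M^\varphi]$, leaving only a sum of $\mathbb{P}$-(local) martingales on the right-hand side.

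The hard part is then the integrability upgrade: showing that $\int_0^\cdot N_{s-}\,\mathrm{d}Z_s$ is a true $\mathbb{P}$-martingale, not merely a local one. For this I would exploit that $\varphi\in b\mathcal{B}$, that the $y$-derivatives of $\varphi$ and each $B^i\varphi$ are bounded, that $Z\in H^1(\mathbb{P})$ by Corollary~\ref{corgenuinemart} under \eqref{condonh}, and that the transformed energy $\mathbb{E}\bigl[\int_0^t Z_s|h(\bar{X}_s)|^2\,\mathrm{d}s\bigr]<\infty$. A localization at joint exit times for $|N|$, for $\int|h|^2\,\mathrm{d}s$ and for $Z$, followed by a Burkholder--Davis--Gundy estimate applied to $\int_0^\cdot N_{s-}^2\,\mathrm{d}[Z]_s=\int_0^\cdot N_{s-}^2 Z_s^2|h(\bar{X}_s)|^2\,\mathrm{d}s$ and a dominated convergence argument, delivers the required true-martingale property. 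Taking $s=0$ in the Bayes identity then yields $\tilde{\mathbb{E}}[N_t]=\mathbb{E}[Z_tN_t]=\mathbb{E}[M_t^\varphi]=0$, which completes the proof.
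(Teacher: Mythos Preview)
Your first assertion (that $Y$ is a $\tilde{\mathbb{P}}$-Brownian motion via Girsanov once Corollary~\ref{corgenuinemart} upgrades $Z$ to a true martingale) matches the paper exactly.

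For the second assertion your route is genuinely different. The paper expands the product $\tilde{Z}M^{\varphi}$ under $\tilde{\mathbb{P}}$ and obtains
\[
\tilde{Z}_tM_t^{\varphi}=\int_0^t M_{s}^{\varphi}\,\mathrm{d}\tilde{Z}_s+\int_0^t\tilde{Z}_s\,\mathrm{d}\tilde{M}_s^{\varphi},
\]
so that $\int_0^{\cdot}\tilde{Z}_s\,\mathrm{d}\tilde{M}_s^{\varphi}$ is the \emph{difference} of two $\tilde{\mathbb{P}}$-martingales: the left-hand side is one by Bayes (since $M^{\varphi}$ is a $\mathbb{P}$-martingale), and $\int M^{\varphi}\,\mathrm{d}\tilde{Z}$ is one because the integrand $M^{\varphi}$ is \emph{bounded on finite intervals} (as $\varphi,A\varphi\in b\mathcal{B}$) and $\tilde{Z}$ is already a $\tilde{\mathbb{P}}$-martingale. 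The boundedness of $M^{\varphi}$ is exactly what makes the integrability upgrade effortless.

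Your expansion of $ZN$ is algebraically correct and yields the nice identity $Z_tN_t=M_t^{\varphi}+\int_0^tN_{s-}\,\mathrm{d}Z_s$, but the integrability burden now falls on $\int N_{-}\,\mathrm{d}Z$, whose integrand $N$ involves $\tilde{Z}$ and is \emph{unbounded}. The sketch you give (``localization at joint exit times, BDG, dominated convergence'') does not identify a dominating integrable random variable for the BDG bound $\bigl(\int_0^t N_{s-}^{2}Z_s^{2}|h(\bar{X}_s)|^{2}\,\mathrm{d}s\bigr)^{1/2}$; note that only condition \eqref{condonh} is available here, not \eqref{adcondonh}, so you cannot invoke $\tilde{Z}\in H^{1}(\tilde{\mathbb{P}})$ or $\mathbb{E}\int|h|^{2}<\infty$. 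Without such a dominating bound the passage from local to true martingale is a genuine gap. The paper's choice of product sidesteps this entirely by pushing the boundedness where it helps; if you prefer your decomposition, the cleanest fix is to observe from the paper's identity that $N_s=\tilde{Z}_sM_s^{\varphi}-\int_0^s M^{\varphi}\,\mathrm{d}\tilde{Z}$ and hence $\int N_{-}\,\mathrm{d}Z=-Z\int M^{\varphi}\,\mathrm{d}\tilde{Z}$, which is a $\mathbb{P}$-martingale by Bayes --- but that is essentially reverting to the paper's argument.
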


\begin{proof}
Lemma \ref{le:filterEqns:genuineMgale}, together with condition \ref{condonh}%
, ensures that $Z$ is a martingale (under $\mathbb{P}$) and that $\mathbb{%
\tilde{P}}$ is a probability measure on each $\mathcal{F}_{t}.$That $Y$
becomes a Brownian motion under $\mathbb{\tilde{P}}$ is an immediate
consequence of Girsanov's theorem. For brevity, let $\beta $ denote the
process defined by 
\begin{equation*}
\beta _{t}:=\sum_{i=1}^{m}\left( h^{i}B^{i}\varphi +\frac{\partial \varphi }{%
\partial y_{i}}\right) \left( \bar{X}_{t}\right) ;
\end{equation*}%
then $\tilde{M}_{t}^{\varphi }$ can be expressed as $M_{t}^{\varphi
}+\int_{0}^{t}\beta _{s}\mathrm{d}s.$ It also follows from (\ref{covariance}%
) and the definition of $\tilde{Z}$ that $\left\langle M^{\varphi },\tilde{Z}%
\right\rangle _{t}=\int_{0}^{t}\tilde{Z}_{s}\beta _{s}\mathrm{d}s.$ But by It%
\^{o}'s integration-by-parts formula 
\begin{eqnarray}
\tilde{Z}_{t}M_{t}^{\varphi } &=&\int_{0}^{t}M_{s}^{\varphi }\mathrm{d}%
\tilde{Z}_{s}+\int_{0}^{t}\tilde{Z}_{s}\mathrm{d}M_{s}^{\varphi
}+\left\langle M^{\varphi },\tilde{Z}\right\rangle _{t}  \notag \\
&=&\int_{0}^{t}M_{s}^{\varphi }\mathrm{d}\tilde{Z}_{s}+\int_{0}^{t}\tilde{Z}%
_{s}\left( \mathrm{d}M_{s}^{\varphi }+\beta _{s}\mathrm{d}s\right)  \notag \\
&=&\int_{0}^{t}M_{s}^{\varphi }\mathrm{d}\tilde{Z}_{s}+\int_{0}^{t}\tilde{Z}%
_{s}\mathrm{d}\tilde{M}_{s}^{\varphi }.  \label{deco}
\end{eqnarray}%
However $M^{\varphi }$ being a martingale under $\mathbb{\tilde{P}}$ implies
that $\tilde{Z}M^{\varphi }$ is a martingale under $\mathbb{\tilde{P}}$, and
the first integral on the right-hand side is a martingale under $\mathbb{%
\tilde{P}}$ because $M^{\varphi }$ is bounded on finite intervals and $%
\tilde{Z}$ itself is a martingale. The conclusion of the proposition follows.
\end{proof}

\begin{remark}
Since $\mathbb{P}$ and $\mathbb{\tilde{P}}$ are absolutely continuous with
respect to each other, they have the same class of null sets $\mathcal{N}$
and therefore the (augmented) observation filtration is the same both under $%
\mathbb{P}$ and $\mathbb{\tilde{P}}$. Since $Y$ is a Brownian motion under $%
\mathbb{\tilde{P}}$ it follows that the filtration $\{\mathcal{Y}_{t},\
t\geq 0\}$ is right-continuous both under $\mathbb{P}$ and $\mathbb{\tilde{P}%
}$. To put it differently, $\{\mathcal{Y}_{t},\ t\geq 0\}$ satisfies the
usual conditions both under $\mathbb{P}$ and under $\mathbb{\tilde{P}}$.
\end{remark}

The following proposition is a consequence of the Brownian motion property
of the process $Y$ under $\tilde{\mathbb{P}}$.

\begin{proposition}
\label{prop:filterEqns:p4} Let $U$ be an integrable $\mathcal{F}_t$%
-measurable random variable. Then we have 
\begin{equation}
\tilde{\mathbb{E}}[U\mid \mathcal{Y}_{t}]=\tilde{\mathbb{E}}[U\mid \mathcal{Y%
}].  \label{yty}
\end{equation}
\end{proposition}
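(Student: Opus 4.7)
The plan is to use the fact that under $\tilde{\mathbb{P}}$ the process $Y$ is a Brownian motion (established in Proposition \ref{prop:filterEqns:YisBM}) which is moreover adapted to the filtration $\{\mathcal{F}_t\}$. The key structural consequence, as for any Brownian motion with respect to a filtration it is adapted to, is that the collection of future increments $\{Y_s - Y_t : s \geq t\}$ is independent of $\mathcal{F}_t$ under $\tilde{\mathbb{P}}$.

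The first step is to decompose the tail. Let $\mathcal{Y}^t := \sigma(Y_s - Y_t : s \geq t)$. By the continuity of $Y$ and a standard approximation one checks that $\mathcal{Y} = \mathcal{Y}_t \vee \mathcal{Y}^t$ (modulo the $\tilde{\mathbb{P}}$-null sets, which coincide with the $\mathbb{P}$-null sets by equivalence on each $\mathcal{F}_t$, hence by the remark preceding this proposition are already contained in $\mathcal{Y}_t$). The class
\[
\mathcal{C} := \{B \cap C : B \in \mathcal{Y}_t,\ C \in \mathcal{Y}^t\}
\]
is then a $\pi$-system generating $\mathcal{Y}$.

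Next I would verify the defining property of conditional expectation on $\mathcal{C}$. Fix $B \in \mathcal{Y}_t \subset \mathcal{F}_t$ and $C \in \mathcal{Y}^t$. Since $U\mathbf{1}_B$ is $\mathcal{F}_t$-measurable and $\mathbf{1}_C \in \sigma(\mathcal{Y}^t)$ is independent of $\mathcal{F}_t$ under $\tilde{\mathbb{P}}$, we obtain
\[
\tilde{\mathbb{E}}[U \mathbf{1}_B \mathbf{1}_C] = \tilde{\mathbb{E}}[U\mathbf{1}_B]\,\tilde{\mathbb{E}}[\mathbf{1}_C] = \tilde{\mathbb{E}}\bigl[\tilde{\mathbb{E}}[U\mid \mathcal{Y}_t]\mathbf{1}_B\bigr]\,\tilde{\mathbb{E}}[\mathbf{1}_C].
\]
The random variable $\tilde{\mathbb{E}}[U\mid \mathcal{Y}_t]\mathbf{1}_B$ is $\mathcal{Y}_t$-measurable, hence $\mathcal{F}_t$-measurable, so by the same independence
\[
\tilde{\mathbb{E}}\bigl[\tilde{\mathbb{E}}[U\mid \mathcal{Y}_t]\mathbf{1}_B\bigr]\,\tilde{\mathbb{E}}[\mathbf{1}_C] = \tilde{\mathbb{E}}\bigl[\tilde{\mathbb{E}}[U\mid \mathcal{Y}_t]\mathbf{1}_B \mathbf{1}_C\bigr].
\]
Combining these identities gives the defining equation for $\tilde{\mathbb{E}}[U\mid\mathcal{Y}]$ on $\mathcal{C}$.

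Finally, the Dynkin (monotone class) theorem extends the equality from the $\pi$-system $\mathcal{C}$ to the generated $\sigma$-algebra $\mathcal{Y}$, yielding $\tilde{\mathbb{E}}[U \mathbf{1}_A] = \tilde{\mathbb{E}}[\tilde{\mathbb{E}}[U\mid\mathcal{Y}_t]\mathbf{1}_A]$ for every $A \in \mathcal{Y}$. Since $\tilde{\mathbb{E}}[U\mid\mathcal{Y}_t]$ is $\mathcal{Y}_t$-measurable and hence $\mathcal{Y}$-measurable, it is a version of $\tilde{\mathbb{E}}[U\mid\mathcal{Y}]$, which is \eqref{yty}. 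The main subtlety is the first step: making the independence of $\mathcal{F}_t$ and the generated tail $\sigma$-algebra $\mathcal{Y}^t$ rigorous and ensuring that augmentation does not enlarge $\mathcal{Y}$ beyond $\mathcal{Y}_t \vee \mathcal{Y}^t$; once this is in place the rest is routine.
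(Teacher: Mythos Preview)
Your proposal is correct and follows essentially the same route as the paper: both introduce the tail $\sigma$-algebra $\mathcal{Y}^t=\sigma(Y_s-Y_t:s\geq t)$, note that $\mathcal{Y}=\mathcal{Y}_t\vee\mathcal{Y}^t$, and use the independence of $\mathcal{Y}^t$ from $\mathcal{F}_t$ under $\tilde{\mathbb{P}}$ (a consequence of $Y$ being an $\{\mathcal{F}_t\}$-Brownian motion) to conclude. The only difference is that the paper invokes the standard conditional-expectation fact (``adding an independent $\sigma$-algebra does not change the conditional expectation'') as a black box, whereas you spell out its proof via a $\pi$-system and Dynkin's theorem.
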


\begin{proof}
Let us denote by 
\begin{equation*}
\mathcal{Y}_{t}^{\prime }=\sigma (Y_{t+u}-Y_{t};\ u\geq 0);
\end{equation*}%
then $\mathcal{Y}=\sigma (\mathcal{Y}_{t},\mathcal{Y}_{t}^{\prime })$. Under
the probability measure $\tilde{\mathbb{P}}$ the $\sigma $-algebra $\mathcal{%
Y}_{t}^{\prime }\subset \mathcal{Y}$ is independent of $\mathcal{F}_{t}$
because $Y$ is an $\mathcal{F}_{t}$-adapted Brownian motion. Hence since $U$
is $\mathcal{F}_{t}$-adapted using the property (f) of conditional
expectation 
\begin{equation*}
\tilde{\mathbb{E}}[U\mid \mathcal{Y}_{t}]=\tilde{\mathbb{E}}[U\mid \sigma (%
\mathcal{Y}_{t},\mathcal{Y}_{t}^{\prime })]=\tilde{\mathbb{E}}[U\mid 
\mathcal{Y}].
\end{equation*}
\end{proof}

\section{Unnormalised Conditional Distribution}

In this section we first prove the Kallianpur--Striebel formula and use this
to define the unnormalized conditional distribution process. The notation $%
\tilde{\mathbb{P}}(\mathbb{P})$-a.s. below means that the result holds both $%
\mathbb{\tilde{P}}$-a.s. and $\mathbb{P}$-a.s. We only need to show that it
holds true in the first sense since $\tilde{\mathbb{P}}$ and $\mathbb{P}$
are equivalent probability measures.

\begin{proposition}[\textbf{Kallianpur--Striebel}]
\label{prop:filterEqns:kallstrie}Assume that condition (\ref{condonh})
holds. For every $\varphi \in $\smallskip $b\mathcal{B}(\mathbb{S})$, for
fixed $t\in \lbrack 0,\infty )$, 
\begin{equation}
\pi _{t}(\varphi )=\frac{\tilde{\mathbb{E}}[\tilde{Z}_{t}\varphi (X_{t})\mid 
\mathcal{Y}]}{\tilde{\mathbb{E}}[\tilde{Z}_{t}\mid \mathcal{Y}]}\quad \quad 
\tilde{\mathbb{P}}(\mathbb{P})\text{-}\mathrm{a.s.}  \label{kallstrie}
\end{equation}
\end{proposition}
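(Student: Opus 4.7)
My plan is to derive the formula as a two-step application of the abstract Bayes rule and Proposition \ref{prop:filterEqns:p4}. Since the measures $\mathbb{P}$ and $\tilde{\mathbb{P}}$ are equivalent on each $\mathcal{F}_t$, with $\mathrm{d}\mathbb{P}/\mathrm{d}\tilde{\mathbb{P}}|_{\mathcal{F}_t}=\tilde{Z}_t$, it suffices to establish the identity $\tilde{\mathbb{P}}$-almost surely.

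First I would invoke the change-of-measure formula for conditional expectations (the abstract Bayes theorem): for any bounded $\mathcal{F}_t$-measurable random variable $U$ and any sub-$\sigma$-algebra $\mathcal{G}\subseteq\mathcal{F}_t$,
\begin{equation*}
\mathbb{E}[U\mid\mathcal{G}] \;=\; \frac{\tilde{\mathbb{E}}[\tilde{Z}_t U\mid\mathcal{G}]}{\tilde{\mathbb{E}}[\tilde{Z}_t\mid\mathcal{G}]}, \qquad \tilde{\mathbb{P}}\text{-a.s.}
\end{equation*}
Specializing to $U=\varphi(X_t)$ (bounded, hence integrable under both measures) and $\mathcal{G}=\mathcal{Y}_t\subseteq\mathcal{F}_t$, this yields
\begin{equation*}
\pi_t(\varphi) \;=\; \mathbb{E}[\varphi(X_t)\mid\mathcal{Y}_t] \;=\; \frac{\tilde{\mathbb{E}}[\tilde{Z}_t\varphi(X_t)\mid\mathcal{Y}_t]}{\tilde{\mathbb{E}}[\tilde{Z}_t\mid\mathcal{Y}_t]}, \qquad \tilde{\mathbb{P}}\text{-a.s.}
\end{equation*}
The denominator is $\tilde{\mathbb{P}}$-a.s.\ strictly positive because $\tilde{Z}_t>0$ almost surely.

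Next I would apply Proposition \ref{prop:filterEqns:p4} to both the numerator and the denominator. Both $\tilde{Z}_t\varphi(X_t)$ and $\tilde{Z}_t$ are $\mathcal{F}_t$-measurable and $\tilde{\mathbb{P}}$-integrable (indeed $\tilde{\mathbb{E}}[\tilde{Z}_t]=1$, and $\varphi$ is bounded). Proposition \ref{prop:filterEqns:p4} therefore gives
\begin{equation*}
\tilde{\mathbb{E}}[\tilde{Z}_t\varphi(X_t)\mid\mathcal{Y}_t] \;=\; \tilde{\mathbb{E}}[\tilde{Z}_t\varphi(X_t)\mid\mathcal{Y}], \qquad \tilde{\mathbb{E}}[\tilde{Z}_t\mid\mathcal{Y}_t] \;=\; \tilde{\mathbb{E}}[\tilde{Z}_t\mid\mathcal{Y}],
\end{equation*}
both $\tilde{\mathbb{P}}$-a.s. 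Substituting these identities into the previous display yields (\ref{kallstrie}) under $\tilde{\mathbb{P}}$, and hence under $\mathbb{P}$ by equivalence.

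I do not expect a serious obstacle here: the two ingredients (Bayes, and the equality of conditional expectations given $\mathcal{Y}_t$ and $\mathcal{Y}$) are already in hand. The only care-points are checking that $\varphi(X_t)$ and $\tilde{Z}_t$ satisfy the integrability hypotheses of the two propositions (immediate from boundedness of $\varphi$ and from $\tilde{\mathbb{E}}[\tilde{Z}_t]=1$) and that the denominator is non-vanishing (which follows from the positivity of $\tilde{Z}_t$).
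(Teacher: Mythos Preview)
Your proposal is correct and follows essentially the same route as the paper: reduce to the $\mathcal{Y}_t$-conditioned identity via the abstract Bayes formula, then pass from $\mathcal{Y}_t$ to $\mathcal{Y}$ using Proposition~\ref{prop:filterEqns:p4}. The only cosmetic difference is that the paper verifies the Bayes identity by hand (testing against bounded $\mathcal{Y}_t$-measurable $b$ and rewriting the $\mathbb{P}$-expectation as a $\tilde{\mathbb{P}}$-expectation with density $\tilde{Z}_t$), whereas you invoke it as a named result.
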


\begin{proof}
It is clear from the definition that $\tilde{Z}_{t}>0$ $\tilde{\mathbb{P}}(%
\mathbb{P})$-a.s. as a consequence of which $\tilde{\mathbb{E}}[\tilde{Z}%
_{t}\mid \mathcal{Y}]>0$ $\mathbb{P}$-a.s. and the right-hand side of (\ref%
{kallstrie}) is well defined. It suffices to show that 
\begin{equation*}
\pi _{t}(\varphi )\tilde{\mathbb{E}}[\tilde{Z}_{t}\mid \mathcal{Y}_{t}]=%
\tilde{\mathbb{E}}[\tilde{Z}_{t}\varphi (X_{t})\mid \mathcal{Y}_{t}]\quad
\quad \tilde{\mathbb{P}}\text{-a.s.}
\end{equation*}%
As both the left- and right-hand sides of this equation are $\mathcal{Y}_{t}$%
-measurable, this is equivalent to showing that for any bounded $\mathcal{Y}%
_{t}$-measurable random variable $b$, 
\begin{equation*}
\tilde{\mathbb{E}}[\pi _{t}(\varphi )\tilde{\mathbb{E}}[\tilde{Z}_{t}\mid 
\mathcal{Y}_{t}]b]=\tilde{\mathbb{E}}[\tilde{\mathbb{E}}[\tilde{Z}%
_{t}\varphi (X_{t})\mid \mathcal{Y}_{t}]b].
\end{equation*}%
A consequence of the definition of the process $\pi _{t}$ is that $\pi
_{t}\varphi =\mathbb{E}[\varphi (X_{t})\mid \mathcal{Y}_{t}]$ $\tilde{%
\mathbb{P}}$-a.s., so from the definition of Kolmogorov conditional
expectation 
\begin{equation*}
\mathbb{E}\left[ \pi _{t}(\varphi )b\right] =\mathbb{E}\left[ \varphi
(X_{t})b\right] .
\end{equation*}%
Writing this under the measure $\tilde{\mathbb{P}}$, 
\begin{equation*}
\tilde{\mathbb{E}}\left[ \pi _{t}(\varphi )b\tilde{Z}_{t}\right] =\tilde{%
\mathbb{E}}\left[ \varphi (X_{t})b\tilde{Z}_{t}\right] .
\end{equation*}%
Since the function $b$ is $\mathcal{Y}_{t}$-measurable, by the tower
property of the conditional expectation, 
\begin{equation*}
\tilde{\mathbb{E}}\left[ \pi _{t}(\varphi )\tilde{\mathbb{E}}[\tilde{Z}%
_{t}\mid \mathcal{Y}_{t}]b\right] =\tilde{\mathbb{E}}\left[ \tilde{\mathbb{E}%
}[\varphi (X_{t})\tilde{Z}_{t}\mid \mathcal{Y}_{t}]b\right]
\end{equation*}%
which proves that the result holds $\tilde{\mathbb{P}}$-a.s.
\end{proof}

Let $\zeta =\{\zeta _{t},\ t\geq 0\}$ be the process defined by 
\begin{equation}
\zeta _{t}=\tilde{\mathbb{E}}[\tilde{Z}_{t}\mid \mathcal{Y}_{t}],
\label{eq:filterEqns:zeta}
\end{equation}%
then as $\tilde{Z}_{t}$ is an $\mathcal{F}_{t}$-martingale under $\tilde{%
\mathbb{P}}$ and $\mathcal{Y}_{s}\subseteq \mathcal{F}_{s}$, it follows that
for $0\leq s<t$, 
\begin{equation*}
\tilde{\mathbb{E}}[\zeta _{t}\mid \mathcal{Y}_{s}]=\tilde{\mathbb{E}}[\tilde{%
Z}_{t}|\mathcal{Y}_{s}]=\tilde{\mathbb{E}}\left[ \tilde{\mathbb{E}}[\tilde{Z}%
_{t}\mid \mathcal{F}_{s}]\mid \mathcal{Y}_{s}\right] =\tilde{\mathbb{E}}[%
\tilde{Z}_{s}\mid \mathcal{Y}_{s}]=\zeta _{s}.
\end{equation*}%
Therefore by Doob's regularization theorem (see Rogers and Williams, \cite[%
Theorem II.67.7]{rw}) since the filtration $\mathcal{Y}_{t}$ satisfies the
usual conditions we can choose a c\`{a}dl\`{a}g version of $\zeta _{t}$
which is a $\mathcal{Y}_{t}$-martingale. In what follows, assume that $%
\{\zeta _{t},t\geq 0\}$ has been chosen to be such a version. 
%$\mathcal{Y}_t$-optional projection of $\tilde{Z}_t$
%with respect to the probability measure $\tilde{\mathbb{P}}$.
%Such a projection 
%exists by Theorem \ref{th:condExpect:optproj} and Remark
%\ref{rem:condExpect:optProj} since $\tilde{Z}_t >0$ a.s.
Given such a $\zeta $, Proposition \ref{prop:filterEqns:kallstrie} suggests
the following definition.

\begin{definition}
\label{def:filterEqns:ucd} Define the \emph{unnormalised conditional
distribution} of $X$ to be the measure-valued process $\rho =\{\rho _{t},\
t\geq 0\}\ $given by $\rho _{t}=\zeta _{t}\pi _{t}$ for any $t\geq 0.$
\end{definition}

\begin{lemma}
\label{le:filterEqns:rhoCadlag}The process $\{\rho _{t},\ t\geq 0\}$ is c%
\`{a}dl\`{a}g and $\mathcal{Y}_{t}$-adapted. Furthermore, for any $t\geq 0$, 
\begin{equation}
\rho _{t}(\varphi )=\tilde{\mathbb{E}}\left[ \tilde{Z}_{t}\varphi
(X_{t})\mid \mathcal{Y}_{t}\right] \quad \quad \tilde{\mathbb{P}}(\mathbb{P})%
\text{-a.s.}  \label{eq:filterEqns:altRho}
\end{equation}
\end{lemma}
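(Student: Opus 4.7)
The plan is to handle the integral representation \eqref{eq:filterEqns:altRho} and the path regularity separately, with both assertions following quickly from results already in place. The identity comes from combining the Kallianpur--Striebel formula (Proposition \ref{prop:filterEqns:kallstrie}) with Proposition \ref{prop:filterEqns:p4}; the cadlag property and $\mathcal{Y}_t$-adaptedness are inherited from the corresponding properties of the two factors $\zeta$ and $\pi$.

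For the integral representation, I would fix $t \geq 0$ and $\varphi \in b\mathcal{B}(\mathbb{S})$, and note that because $\varphi$ is bounded and $\tilde{\mathbb{E}}[\tilde{Z}_t] = 1$, both $\tilde{Z}_t$ and $\tilde{Z}_t\varphi(X_t)$ are $\mathcal{F}_t$-measurable and $\tilde{\mathbb{P}}$-integrable. Proposition \ref{prop:filterEqns:p4} therefore applies to each, giving
\begin{equation*}
\tilde{\mathbb{E}}[\tilde{Z}_t \mid \mathcal{Y}] = \tilde{\mathbb{E}}[\tilde{Z}_t \mid \mathcal{Y}_t] = \zeta_t, \qquad \tilde{\mathbb{E}}[\tilde{Z}_t\varphi(X_t) \mid \mathcal{Y}] = \tilde{\mathbb{E}}[\tilde{Z}_t\varphi(X_t) \mid \mathcal{Y}_t].
\end{equation*}
Substituting these equalities into the Kallianpur--Striebel formula and multiplying through by $\zeta_t$ yields
\begin{equation*}
\rho_t(\varphi) = \zeta_t\,\pi_t(\varphi) = \tilde{\mathbb{E}}[\tilde{Z}_t\varphi(X_t) \mid \mathcal{Y}_t]
\end{equation*}
$\tilde{\mathbb{P}}$-a.s., hence also $\mathbb{P}$-a.s. by the equivalence of the two measures on $\mathcal{F}_t$.

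For the path regularity, recall that $\zeta$ has already been chosen as a cadlag $\mathcal{Y}_t$-martingale (via Doob's regularisation, which applies because $\mathcal{Y}_t$ satisfies the usual conditions), and that the standing version of $\pi$ is cadlag and $\mathcal{Y}_t$-adapted (as noted after the definition of $\pi$ in Section 2). For each fixed $\varphi \in b\mathcal{B}(\mathbb{S})$, the scalar process $t \mapsto \rho_t(\varphi) = \zeta_t\pi_t(\varphi)$ is then a pointwise product of real-valued cadlag $\mathcal{Y}_t$-adapted processes, hence itself cadlag and $\mathcal{Y}_t$-adapted. Letting $\varphi$ range over $b\mathcal{B}(\mathbb{S})$ (or, if one prefers, over $C_b(\mathbb{S})$ for the weak topology) gives the measure-valued cadlag property of $\rho$.

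There is no genuine obstacle in this proof: the lemma is essentially bookkeeping that stitches the Kallianpur--Striebel representation, Proposition \ref{prop:filterEqns:p4}, and the chosen regular versions of $\zeta$ and $\pi$ together. The only point requiring a moment's care is the verification of the hypotheses of Proposition \ref{prop:filterEqns:p4}---namely the $\mathcal{F}_t$-measurability and $\tilde{\mathbb{P}}$-integrability of $\tilde{Z}_t$ and $\tilde{Z}_t\varphi(X_t)$---both of which are immediate in the present setting.
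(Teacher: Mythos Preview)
Your proof is correct and follows essentially the same approach as the paper: both obtain the identity \eqref{eq:filterEqns:altRho} by combining the Kallianpur--Striebel formula with Proposition \ref{prop:filterEqns:p4} (to pass between $\mathcal{Y}$ and $\mathcal{Y}_t$) and the definition $\zeta_t=\tilde{\mathbb{E}}[\tilde{Z}_t\mid\mathcal{Y}_t]$, and both deduce the c\`adl\`ag and $\mathcal{Y}_t$-adaptedness of $\rho$ from the corresponding properties already established for $\zeta$ and $\pi$. Your write-up is slightly more explicit in checking the hypotheses of Proposition \ref{prop:filterEqns:p4}, but the substance is identical.
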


\begin{proof}
Both $\pi _{t}(\varphi )$ and $\zeta _{t}$ are $\mathcal{Y}_{t}$-adapted. By
construction $\{\zeta _{t},\ t\geq 0\}$ is also c\`{a}dl\`{a}g. We know that 
$\{\pi _{t},\ t\geq 0\}$ is c\`{a}dl\`{a}g and $\mathcal{Y}_{t}$-adapted;
therefore the process $\{\rho _{t},t\geq 0\}$ is also c\`{a}dl\`{a}g and $%
\mathcal{Y}_{t}$-adapted.

For the second part, from Proposition \ref{prop:filterEqns:p4} and
Proposition \ref{prop:filterEqns:kallstrie} it follows that 
\begin{equation*}
\pi _{t}(\varphi )\tilde{\mathbb{E}}[\tilde{Z}_{t}\mid \mathcal{Y}_{t}]=%
\tilde{\mathbb{E}}[\tilde{Z}_{t}\varphi (X_{t})\mid \mathcal{Y}_{t}]\quad
\quad \tilde{\mathbb{P}}\text{-a.s.},
\end{equation*}%
From (\ref{eq:filterEqns:zeta}), $\tilde{\mathbb{E}}[\tilde{Z}_{t}\mid 
\mathcal{Y}_{t}]=\zeta _{t}$ a.s. from which the result follows.
\end{proof}

\begin{corollary}
Assume that condition (\ref{condonh}) holds. For every $\varphi \in B(%
\mathbb{S})$, 
\begin{equation}  \label{eq:filterEqns:corKS}
\pi_t(\varphi)=\frac{\rho_t(\varphi)}{ \rho_t(\mathbf{1})} \quad\quad
\forall t \in [0,\infty)\quad \quad \tilde{\mathbb{P}}(\mathbb{P}) \text{%
-a.s.}
\end{equation}
\end{corollary}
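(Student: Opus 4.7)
The plan is to derive this corollary as a direct strengthening of the Kallianpur--Striebel identity in Proposition \ref{prop:filterEqns:kallstrie}, upgrading the ``for each fixed $t$, almost surely'' statement to ``almost surely, for all $t$'' by invoking the c\`adl\`ag regularity of the processes involved.

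First I would observe that by Definition \ref{def:filterEqns:ucd} we have $\rho_{t}=\zeta_{t}\pi_{t}$ identically, so taking $\varphi=\mathbf{1}$ and using $\pi_{t}(\mathbf{1})=1$ gives $\rho_{t}(\mathbf{1})=\zeta_{t}$ for every $t$ without exception. Thus the ratio $\rho_{t}(\varphi)/\rho_{t}(\mathbf{1})$ equals $\pi_{t}(\varphi)$ whenever $\zeta_{t}>0$, and the whole proof reduces to (a) verifying positivity of $\zeta_{t}$ for all $t$ simultaneously, and (b) combining two per-$t$ almost sure identities into a single almost sure identity valid for all $t$.

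For positivity, I would argue that since $\tilde{Z}_{t}>0$ $\tilde{\mathbb{P}}$-a.s., the conditional expectation $\zeta_{t}=\tilde{\mathbb{E}}[\tilde{Z}_{t}\mid\mathcal{Y}_{t}]$ is strictly positive for each fixed $t$, $\tilde{\mathbb{P}}$-a.s. The process $\zeta$ is a nonnegative c\`adl\`ag $\mathcal{Y}_{t}$-martingale under $\tilde{\mathbb{P}}$; by the standard fact that a nonnegative c\`adl\`ag martingale absorbed at $0$ (a consequence of optional stopping at the hitting time of $0$) cannot revive, the event $\{\zeta_{t}>0\ \forall t\geq 0\}$ has full $\tilde{\mathbb{P}}$-measure, and equivalently full $\mathbb{P}$-measure by the equivalence of the measures on each $\mathcal{F}_{t}$.

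For the main identity, Proposition \ref{prop:filterEqns:kallstrie} together with Lemma \ref{le:filterEqns:rhoCadlag} gives, for each fixed $t\geq 0$,
\begin{equation*}
\pi_{t}(\varphi)\,\zeta_{t}=\rho_{t}(\varphi)\qquad \tilde{\mathbb{P}}\text{-a.s.}
\end{equation*}
Applied along a countable dense set $Q\subset[0,\infty)$, this produces a single $\tilde{\mathbb{P}}$-null set $N$ off which the equality holds simultaneously for all $q\in Q$. Since $t\mapsto\pi_{t}(\varphi)$ is c\`adl\`ag (the chosen regularization from the discussion following \eqref{nfp}) and $t\mapsto\rho_{t}(\varphi)=\zeta_{t}\pi_{t}(\varphi)$ is c\`adl\`ag (Lemma \ref{le:filterEqns:rhoCadlag}), right-continuity lets me pass $q\downarrow t$ along $Q$ outside $N$ to obtain $\pi_{t}(\varphi)\zeta_{t}=\rho_{t}(\varphi)$ for all $t\geq 0$ simultaneously. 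Dividing by the strictly positive $\zeta_{t}=\rho_{t}(\mathbf{1})$ yields the stated identity $\tilde{\mathbb{P}}(\mathbb{P})$-a.s. for all $t\in[0,\infty)$.

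The only genuinely delicate point is the simultaneous-in-$t$ positivity of $\zeta$; once that is secured, the rest is bookkeeping that exploits the c\`adl\`ag regularity assembled in the preceding section. I do not expect any real obstruction beyond being careful to write all null sets as arising from the countable family of fixed-$t$ statements before invoking right-continuity.
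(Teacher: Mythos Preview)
Your argument is correct, and the core observation is exactly the paper's: since $\rho_{t}=\zeta_{t}\pi_{t}$ is the \emph{definition} of $\rho$, one has $\rho_{t}(\mathbf{1})=\zeta_{t}$ identically and the identity $\pi_{t}(\varphi)=\rho_{t}(\varphi)/\rho_{t}(\mathbf{1})$ follows at once wherever $\zeta_{t}>0$. The paper's proof stops there in two lines.

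However, your step (b) is redundant and you have effectively proved the result twice. You already note in your first paragraph that $\rho_{t}(\varphi)=\zeta_{t}\pi_{t}(\varphi)$ holds \emph{identically}, for every $t$ and every $\omega$, by Definition~\ref{def:filterEqns:ucd}; there is no per-$t$ null set to sweep up. The detour through Proposition~\ref{prop:filterEqns:kallstrie}, a countable dense set, and right-continuity is therefore unnecessary: you are re-deriving, almost surely and via Kallianpur--Striebel, an equality that already holds pointwise by fiat. The only genuine content beyond the paper's two-line proof is your step (a), the simultaneous-in-$t$ strict positivity of the c\`adl\`ag nonnegative martingale $\zeta$. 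That is a legitimate point the paper glosses over, and your argument for it (absorption at zero) is the standard one; but once (a) is secured the corollary is immediate from the definition, without any appeal to Kallianpur--Striebel or c\`adl\`ag extension from a dense set.
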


\begin{proof}
It is clear from Definition \ref{def:filterEqns:ucd} that $\zeta _{t}=\rho
_{t}(\mathbf{1})$. The result then follows immediately.
\end{proof}

The Kallianpur--Striebel formula explains the usage of the term \emph{%
unnormalised} in the definition of $\rho _{t}$ as the denominator $\rho _{t}(%
\mathbf{1})$ can be viewed as the normalising factor.

\begin{lemma}
\begin{description}
\item[i.] Let $\{u_{t},\ t\geq 0\}$ be an $\mathcal{F}_{t}$-progressively
measurable process such that for all $t\geq 0$, we have 
\begin{equation}
\tilde{\mathbb{E}}\left[ \left( \int_{0}^{t}u_{s}^{2}\,\mathrm{d}s\right)
^{1/2}\right] <\infty ;  \label{scondu}
\end{equation}%
then, for all $t\geq 0$, and $j=1,\ldots ,m$, we have%
\begin{equation}
\tilde{\mathbb{E}}\left[ \left. \int_{0}^{t}u_{s}\,\mathrm{d}%
Y_{s}^{j}\,\right\vert \,\mathcal{Y}\right] =\int_{0}^{t}\tilde{\mathbb{E}}%
[u_{s}\mid \mathcal{Y}]\,\mathrm{d}Y_{s}^{j}.  \label{condU}
\end{equation}

\item[ii.] Let $\tilde{M}^{\varphi }$ be as defined in Proposition \ref%
{prop:filterEqns:YisBM}. Then for all $t\geq 0$ 
\begin{equation}
\tilde{\mathbb{E}}\left[ \left. \int_{0}^{t}\tilde{Z}_{s}\,\mathrm{d}\tilde{M%
}_{s}^{\varphi }\,\right\vert \,\mathcal{Y}\right] =\sum_{j=1}^{m}%
\int_{0}^{t}\tilde{\mathbb{E}}\left[ \left. \left( B^{j}\varphi +\frac{%
\partial \varphi }{\partial y_{j}}\right) \left( \bar{X}_{s}\right) \tilde{Z}%
_{s}\,\right\vert \,\mathcal{Y}\right] \mathrm{d}Y_{s}^{j},  \label{condu''}
\end{equation}
\end{description}
\end{lemma}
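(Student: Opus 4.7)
For part (i), the plan is standard: verify the identity on elementary integrands, then extend by approximation. For $u_s = \xi\mathbf{1}_{(a,b]}(s)$ with $\xi$ bounded and $\mathcal{F}_a$-measurable one has $\int_0^t u_s\, dY^j_s = \xi(Y^j_{b\wedge t} - Y^j_{a\wedge t})$, and since the Brownian increment is $\mathcal{Y}$-measurable it may be pulled outside $\tilde{\mathbb{E}}[\,\cdot\mid\mathcal{Y}]$; by Proposition \ref{prop:filterEqns:p4}, $\tilde{\mathbb{E}}[\xi\mid\mathcal{Y}] = \tilde{\mathbb{E}}[\xi\mid\mathcal{Y}_a]$, which is $\mathcal{Y}_a$-measurable, so the right-hand side of \eqref{condU} is again an elementary stochastic integral against $Y^j$. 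Linearity extends this to bounded simple integrands. For general $u$ satisfying \eqref{scondu}, approximate by simple $u^n$ with $\tilde{\mathbb{E}}[(\int_0^t (u-u^n)^2\, ds)^{1/2}] \to 0$. The Burkholder--Davis--Gundy (Davis $L^1$) inequality applied to the $\tilde{\mathbb{P}}$-Brownian motion $Y^j$ gives $L^1(\tilde{\mathbb{P}})$-convergence on the left; the conditional Jensen bound $|\tilde{\mathbb{E}}[u - u^n\mid\mathcal{Y}]|^2 \le \tilde{\mathbb{E}}[|u - u^n|^2\mid\mathcal{Y}]$ followed by BDG gives the same on the right, once one localizes by $\tau_N := \inf\{t : \int_0^t u_s^2\, ds \ge N\}$ to handle the weaker-than-$L^2$ nature of \eqref{scondu}.

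For part (ii), the plan is to apply part (i) to the integrand $u^j_s := \tilde{Z}_s(B^j\varphi + \partial\varphi/\partial y_j)(\bar{X}_s)$, which recasts the right-hand side of \eqref{condu''} as $\sum_{j=1}^m \tilde{\mathbb{E}}[\int_0^t u^j_s\, dY^j_s \mid \mathcal{Y}]$. It then suffices to prove that the process
\[
N_t := \int_0^t \tilde{Z}_s\, d\tilde{M}^\varphi_s - \sum_{j=1}^m \int_0^t \tilde{Z}_s(B^j\varphi + \partial\varphi/\partial y_j)(\bar{X}_s)\, dY^j_s
\]
satisfies $\tilde{\mathbb{E}}[N_t\mid\mathcal{Y}] = 0$. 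Proposition \ref{prop:filterEqns:YisBM} furnishes the first integral as a $\tilde{\mathbb{P}}$-martingale, and the second is a stochastic integral against the $\tilde{\mathbb{P}}$-Brownian motion $Y$, so $N$ is a $\tilde{\mathbb{P}}$-local martingale. The key calculation is its cross-variation with each $Y^j$: because $\tilde{M}^\varphi - M^\varphi$ and $Y^j - W^j$ are both of finite variation, $\langle\tilde{M}^\varphi, Y^j\rangle = \langle M^\varphi, W^j\rangle$, which by \eqref{covariance} equals $\int_0^\cdot (B^j\varphi + \partial\varphi/\partial y_j)(\bar{X}_s)\, ds$, and this cross-variation cancels exactly with that of the subtracted stochastic integral, giving $\langle N, Y^j\rangle \equiv 0$ for every $j$.

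Since $\mathcal{Y}_t$ is (under $\tilde{\mathbb{P}}$) the augmented natural filtration of the Brownian motion $Y$, the Brownian martingale representation theorem implies that any $\tilde{\mathbb{P}}$-local martingale orthogonal to each $Y^j$ has vanishing $\mathcal{Y}_t$-projection; hence $\tilde{\mathbb{E}}[N_t\mid\mathcal{Y}_t] = 0$, and Proposition \ref{prop:filterEqns:p4} upgrades this to $\tilde{\mathbb{E}}[N_t\mid\mathcal{Y}] = 0$, completing the proof. The main obstacle lies in the approximation step of part (i), where \eqref{scondu} offers only $L^1$ control of $(\int u^2\, ds)^{1/2}$ rather than $L^2$ control of $\int u^2\, ds$; the localization and conditional-Jensen/BDG argument indicated above bridges this gap and is the essential technical content.
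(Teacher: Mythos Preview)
Your approach is genuinely different from the paper's. Rather than approximating by simple integrands and invoking martingale representation, the paper proves both parts by a duality argument: it tests the two sides of each identity against the total set
\[
S_{t}=\left\{ \varepsilon _{t}=\exp \!\left( i\int_{0}^{t}r_{s}^{\top }\,\mathrm{d}Y_{s}+\tfrac{1}{2}\int_{0}^{t}\Vert r_{s}\Vert ^{2}\,\mathrm{d}s\right) :r\in L^{\infty }\left( [0,t],\mathbb{R}^{m}\right) \right\}
\]
of bounded $\mathcal{Y}_{t}$-measurable random variables, using $\varepsilon _{t}=1+\int_{0}^{t}i\varepsilon _{s}r_{s}^{\top }\,\mathrm{d}Y_{s}$ and computing covariations. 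This bypasses all approximation and delivers part~(ii) by essentially the same calculation as part~(i), so the paper's route is more economical; your route, by contrast, is more structural and makes the role of orthogonality explicit.

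There is, however, a genuine gap in your part~(i). Your conditional-Jensen bound yields
\[
\left( \int_{0}^{t}\bigl|\tilde{\mathbb{E}}[u_{s}-u_{s}^{n}\mid \mathcal{Y}]\bigr|^{2}\,\mathrm{d}s\right)^{1/2}\leq \left( \tilde{\mathbb{E}}\!\left[ \int_{0}^{t}(u_{s}-u_{s}^{n})^{2}\,\mathrm{d}s\ \Big|\ \mathcal{Y}\right] \right)^{1/2},
\]
but taking $\tilde{\mathbb{E}}$ of the right side and invoking \eqref{scondu} fails: Jensen's inequality for the concave square root gives $\tilde{\mathbb{E}}\bigl[(\tilde{\mathbb{E}}[V\mid \mathcal{Y}])^{1/2}\bigr]\geq \tilde{\mathbb{E}}[V^{1/2}]$, the wrong direction. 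The proposed localisation by $\tau _{N}$ does not rescue this directly, because $\tau _{N}$ is an $\mathcal{F}_{t}$-stopping time but not a $\mathcal{Y}_{t}$-stopping time, so one cannot interpret the stopped right-hand side as the $\mathcal{Y}$-adapted stochastic integral stopped at $\tau _{N}$. A repair is possible: prove the identity first for $u\mathbf{1}_{[0,\tau _{N}]}$ (which lies in $L^{2}$, so the $L^{2}$-contraction of conditional expectation applies), pass to the limit on the left via BDG and dominated convergence, and then identify the $L^{1}$ limit on the right with $\int_{0}^{t}\tilde{\mathbb{E}}[u_{s}\mid \mathcal{Y}]\,\mathrm{d}Y_{s}^{j}$; but this last identification requires additional argument and is exactly what the paper's total-set method sidesteps. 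Your part~(ii) is sound in outline, though when you unpack ``martingale representation implies vanishing $\mathcal{Y}_{t}$-projection'' you are in effect testing $N_{t}$ against bounded $\mathcal{Y}_{t}$-martingales, which is the paper's strategy in disguise.
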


\begin{proof}
\begin{description}
\item 

\item[i.] To deduce the results we introduce the set of uniformly bounded
test random variables%
\begin{equation}
S_{t}=\left\{ \varepsilon _{t}=\exp \!\left( i\int_{0}^{t}r_{s}^{\top }\,%
\mathrm{d}Y_{s}+\frac{1}{2}\int_{0}^{t}\Vert r_{s}\Vert ^{2}\,\mathrm{d}%
s\right) :r\in L^{\infty }\left( [0,t],\mathbb{R}^{m}\right) \right\} .
\label{class}
\end{equation}%
Then $S_{t}$ is a total set. That is, if $a\in L^{1}(\Omega ,\mathcal{Y}_{t},%
\tilde{\mathbb{P}})$ and $\tilde{\mathbb{E}}[a\varepsilon _{t}]=0$, for all $%
\varepsilon _{t}\in S_{t}$, then $a=0$ $\tilde{\mathbb{P}}$-a.s. For a proof
of this result see, for example, Lemma B.39 page 355 in Bain and Crisan \cite%
{bc}. In addition, if $\varepsilon _{t}\in S_{t}$ $,$ then 
\begin{equation*}
\varepsilon _{t}=1+\int_{0}^{t}i\varepsilon _{s}r_{s}^{\top }\,\mathrm{d}%
Y_{s}.
\end{equation*}%
From condition (\ref{scondu}) it follows, by Burkholder-Davis-Gundy's
inequalities that both processes $t\rightarrow \int_{0}^{t}u_{s}\,\mathrm{d}%
Y_{s}^{j}$ and $t\rightarrow \int_{0}^{t}\tilde{\mathbb{E}}\left[ \left.
u_{s}\,\right\vert \,\mathcal{Y}\right] \,\mathrm{d}Y_{s}^{j}$ belong to $%
H^{1}(\mathbb{\tilde{P})}$.\ In particular they are zero-mean martingales.
We observe the following sequence of identities 
\begin{align*}
\tilde{\mathbb{E}}\left[ \varepsilon _{t}\tilde{\mathbb{E}}\left[ \left.
\int_{0}^{t}u_{s}\,\mathrm{d}Y_{s}^{j}\,\right\vert \,\mathcal{Y}\right] %
\right] & =\tilde{\mathbb{E}}\left[ \varepsilon _{t}\int_{0}^{t}u_{s}\,%
\mathrm{d}Y_{s}^{j}\right] \\
& =\tilde{\mathbb{E}}\left[ \int_{0}^{t}u_{s}\,\mathrm{d}Y_{s}^{j}\right] +%
\tilde{\mathbb{E}}\left[ \int_{0}^{t}i\varepsilon _{s}r_{s}^{j}u_{s}\,%
\mathrm{d}s\right] \\
& =\tilde{\mathbb{E}}\left[ \left. \tilde{\mathbb{E}}\left[
\int_{0}^{t}i\varepsilon _{s}r_{s}^{j}u_{s}\,\mathrm{d}s\,\right\vert \,%
\mathcal{Y}\right] \right] \\
& =\tilde{\mathbb{E}}\left[ \int_{0}^{t}i\varepsilon _{s}r_{s}^{j}\,\tilde{%
\mathbb{E}}[u_{s}\mid \mathcal{Y}]\,\mathrm{d}s\right] \\
& =\tilde{\mathbb{E}}\left[ \varepsilon _{t}\int_{0}^{t}\tilde{\mathbb{E}}%
[u_{s}\mid \mathcal{Y}]\,\mathrm{d}Y_{s}^{j}\right] ,
\end{align*}%
which completes the proof of (\ref{condU}).

\item[ii.] From Proposition \ref{prop:filterEqns:YisBM} we know that $%
\int_{0}^{\cdot }$ $\tilde{Z}_{s}\,\mathrm{d}\tilde{M}_{s}^{\varphi }$ is a
zero-mean martingale under $\mathbb{\tilde{P}}$. It is therefore integrable
and its conditional expectation is well defined. \ Notice that 
\begin{equation*}
\left\langle \tilde{M}^{\varphi },Y^{j}\right\rangle _{t}=\left\langle
M^{\varphi },W^{j}\right\rangle _{t}=\int_{0}^{t}\left( B^{j}\varphi +\frac{%
\partial \varphi }{\partial y_{j}}\right) \left( \bar{X}_{s}\right) \,%
\mathrm{d}s
\end{equation*}%
The rest of the proof of (\ref{condu''}) is similar to that of (\ref{condU}%
). Once again we choose $\varepsilon _{t}$ from the set $S_{t}$ and in this
case we obtain the following sequence of identities. 
\begin{align*}
\tilde{\mathbb{E}}\left[ \varepsilon _{t}\tilde{\mathbb{E}}\left[ \left.
\int_{0}^{t}\tilde{Z}_{s}\,\mathrm{d}\tilde{M}_{s}^{\varphi }\,\right\vert \,%
\mathcal{Y}\right] \right] ={}& \tilde{\mathbb{E}}\left[ \varepsilon
_{t}\int_{0}^{t}\tilde{Z}_{s}\,\mathrm{d}\tilde{M}_{s}^{\varphi }\right] \\
={}& \tilde{\mathbb{E}}\left[ \int_{0}^{t}\tilde{Z}_{s}\,\mathrm{d}\tilde{M}%
_{s}^{\varphi }\right] +\sum_{j=1}^{m}\tilde{\mathbb{E}}\left\langle
\int_{0}^{\cdot }i\varepsilon _{s}r_{s}^{j}\,\mathrm{d}Y_{s}^{j},\int_{0}^{%
\cdot }\tilde{Z}_{s}\,\mathrm{d}\tilde{M}_{s}^{\varphi }\right\rangle _{t} \\
={}& \tilde{\mathbb{E}}\left[ \int_{0}^{t}\tilde{Z}_{s}\,\mathrm{d}\tilde{M}%
_{s}^{\varphi }\right] +\sum_{j=1}^{m}\tilde{\mathbb{E}}\int_{0}^{t}i%
\varepsilon _{s}r_{s}^{j}\tilde{Z}_{s}\,\mathrm{d}\left\langle \tilde{M}%
^{\varphi },Y^{j}\right\rangle _{s} \\
={}& \sum_{j=1}^{m}\tilde{\mathbb{E}}\int_{0}^{t}i\varepsilon _{s}r_{s}^{j}%
\tilde{Z}_{s}\,\left( B^{j}\varphi +\frac{\partial \varphi }{\partial y_{j}}%
\right) \left( \bar{X}_{s}\right) \mathrm{d}s \\
={}& \sum_{j=1}^{m}\tilde{\mathbb{E}}\left[ \varepsilon _{t}\int_{0}^{t}%
\tilde{\mathbb{E}}\left[ \left. \left( B^{j}\varphi +\frac{\partial \varphi 
}{\partial y_{j}}\right) \left( \bar{X}_{s}\right) \tilde{Z}%
_{s}\,\right\vert \,\mathcal{Y}\right] \mathrm{d}Y_{s}^{j}\right] .
\end{align*}%
As the identities hold for an arbitrary choice of $\varepsilon _{t}\in
S_{t}, $ the proof of (\ref{condu''}) is complete
\end{description}
\end{proof}

\section{The Filtering Equations\label{section6}}

To simplify the analysis, we will impose onto $\tilde{Z}\,\ $a similar
condition to (\ref{condonh}). More precisely, we will assume that, 
\begin{equation}
\mathbb{\tilde{E}}\left[ \int_{0}^{t}\tilde{Z}_{s}\left\Vert h(\bar{X}%
_{s})\right\Vert ^{2}\,\mathrm{d}s\right] <\infty ,\quad \forall t>0.
\label{adcondonh}
\end{equation}%
Reverting back to $\mathbb{P}$, condition (\ref{adcondonh}) is equivalent to 
\begin{equation}
\mathbb{E}\left[ \int_{0}^{t}\left\Vert h(\bar{X}_{s})\right\Vert ^{2}\,%
\mathrm{d}s\right] <\infty ,\quad \forall t>0.  \label{adcondonh'}
\end{equation}%
From Corollary \ref{corgenuinemart}, it follows that $\tilde{Z}$ is an $%
H^{1}(\mathbb{\tilde{P})}$-martingale. Then $\left( \tilde{Z}_{\cdot
}-1\right) $ is a zero-mean martingale and $\mathbb{E}\left[ \left( \tilde{Z}%
_{\cdot }-1\right) _{t}^{\ast }\right] \,<1+$ $\mathbb{E}\left[ \tilde{Z}%
_{t}^{\ast }\right] \,<\infty $. Since \thinspace $\left\langle \tilde{Z}%
_{\cdot }-1\right\rangle _{t}=\int_{0}^{t}\tilde{Z}_{s}^{2}\left\vert h(\bar{%
X}_{s})\right\vert ^{2}ds$ the Burkholder-Davis-Gundy inequalities give%
\begin{equation}
\mathbb{E}\left[ \left( \int_{0}^{t}\tilde{Z}_{s}^{2}\left\vert h(\bar{X}%
_{s})\right\vert ^{2}ds\right) ^{1/2}\right] <\infty  \label{usedlater}
\end{equation}%
for all $t\geq 0$ and hence, for any $\varphi \in $\smallskip $b\mathcal{B}(%
\mathbb{S\times R}^{m}),$ the processes 
\begin{eqnarray*}
t &\rightarrow &\int_{0}^{t}\varphi (\bar{X}_{t})\tilde{Z}_{t}h(\bar{X}%
_{s})^{\top }\mathrm{d}Y_{s} \\
t &\rightarrow &\int_{0}^{t}\tilde{\mathbb{E}}[\varphi (\bar{X}_{t})\tilde{Z}%
_{t}h(\bar{X}_{s})^{\top }\mid \mathcal{Y}_{t}]\,\mathrm{d}Y_{s}
\end{eqnarray*}%
are zero-mean $H^{1}(\mathbb{\tilde{P})}$ martingales. In the following, for
any function $\varphi \in b\mathcal{B}(\mathbb{S\times R}^{m})$ such that $%
\varphi \in \mathcal{D}(A)$ and that has bounded partial derivatives in the $%
y$ direction we will denote by $D_{i}\varphi ,~~~j=1,\ldots ,m$ the
functions 
\begin{equation*}
D_{j}\varphi =h^{j}\left( \varphi +B^{j}\varphi +\frac{\partial \varphi }{%
\partial y_{j}}\right) ~~~~~~j=1,\ldots ,m.
\end{equation*}

\begin{theorem}
If conditions (\ref{condonh}) and (\ref{adcondonh}) are satisfied then, 
\begin{equation}
\tilde{\mathbb{E}}[\tilde{Z}_{t}\varphi (\bar{X}_{t})\mid \mathcal{Y}]={}\pi
_{0}(\varphi )+\int_{0}^{t}\tilde{\mathbb{E}}[\tilde{Z}_{s}A\varphi (\bar{X}%
_{s})\mid \mathcal{Y}]\,\mathrm{d}s+\sum_{j=1}^{m}\tilde{\mathbb{E}}[\tilde{Z%
}_{s}D_{j}\varphi (\bar{X}_{s})\mid \mathcal{Y}]\mathrm{d}Y_{s}^{j}
\label{intermediate1}
\end{equation}%
for any $\varphi \in b\mathcal{B}(\mathbb{S\times R}^{m})$ be a function
such that $\varphi ,\varphi ^{2}\in \mathcal{D}(A)$ and that has bounded
partial derivatives in the $y$ direction. In particular the process $\rho
_{t}$ satisfies the following evolution equation 
\begin{equation}
\rho _{t}(\varphi )=\rho _{0}(\varphi )+\int_{0}^{t}\rho _{s}\left( A\varphi
\right) \,\mathrm{d}s+\int_{0}^{t}\rho _{s}((h^{\top }+B^{\top })\varphi )\,%
\mathrm{d}Y_{s},\ \quad \tilde{\mathbb{P}}\text{-a.s.}\ \forall t\geq 0
\label{eq:filterEqns:zakaifunctint}
\end{equation}%
for any function $\varphi \in $\smallskip $b\mathcal{B}(\mathbb{S})$ be a
function such that $\varphi \in \mathcal{D}(A)$.
\end{theorem}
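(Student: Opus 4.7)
The plan is to derive (\ref{intermediate1}) by applying Itô's product formula to $\tilde{Z}_t\varphi(\bar{X}_t)$ and projecting onto $\mathcal{Y}$, then specialising to $\varphi$ depending only on the signal variable to obtain (\ref{eq:filterEqns:zakaifunctint}) via Proposition \ref{prop:filterEqns:p4} and Lemma \ref{le:filterEqns:rhoCadlag}.

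First, combining (\ref{ztilde1}) with the martingale-problem decomposition $\varphi(\bar{X}_t)-\varphi(\bar{X}_0)=\int_0^t A\varphi(\bar{X}_s)\,\mathrm{d}s+M_t^\varphi$, integration by parts gives
\begin{equation*}
\tilde{Z}_t\varphi(\bar{X}_t)=\varphi(\bar{X}_0)+\int_0^t\tilde{Z}_s A\varphi(\bar{X}_s)\,\mathrm{d}s+\int_0^t\tilde{Z}_s\,\mathrm{d}\tilde{M}_s^\varphi+\int_0^t\varphi(\bar{X}_s)\,\mathrm{d}\tilde{Z}_s,
\end{equation*}
where I use the continuity of $\tilde{Z}$ together with (\ref{covariance}) to identify the covariation $\langle\tilde{Z},M^\varphi\rangle_t=\sum_i\int_0^t\tilde{Z}_s h^i(\bar{X}_s)\bigl(B^i\varphi+\partial_{y_i}\varphi\bigr)(\bar{X}_s)\,\mathrm{d}s$ and absorb it into the $\tilde{M}^\varphi$ integral, exactly as in the proof of Proposition \ref{prop:filterEqns:YisBM}.

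Next I would take $\tilde{\mathbb{E}}[\,\cdot\mid\mathcal{Y}]$ of both sides. The term $\tilde{\mathbb{E}}[\varphi(\bar{X}_0)\mid\mathcal{Y}]$ equals $\pi_0(\varphi)$ because under $\tilde{\mathbb{P}}$ the Brownian motion $Y$ is independent of $\mathcal{F}_0$ and the law of $X_0$ is unchanged (since $\tilde{Z}_0=1$). Conditional Fubini handles the drift term, yielding $\int_0^t\tilde{\mathbb{E}}[\tilde{Z}_s A\varphi(\bar{X}_s)\mid\mathcal{Y}]\,\mathrm{d}s$. For the two stochastic integrals, I invoke the two parts of the preceding lemma: (\ref{condu''}) gives $\tilde{\mathbb{E}}\bigl[\int_0^t\tilde{Z}_s\,\mathrm{d}\tilde{M}_s^\varphi\mid\mathcal{Y}\bigr]=\sum_j\int_0^t\tilde{\mathbb{E}}[(B^j\varphi+\partial_{y_j}\varphi)(\bar{X}_s)\tilde{Z}_s\mid\mathcal{Y}]\,\mathrm{d}Y_s^j$, while (\ref{condU}) applied to $u_s^j=\tilde{Z}_s h^j(\bar{X}_s)\varphi(\bar{X}_s)$ gives $\tilde{\mathbb{E}}\bigl[\int_0^t\varphi(\bar{X}_s)\,\mathrm{d}\tilde{Z}_s\mid\mathcal{Y}\bigr]=\sum_j\int_0^t\tilde{\mathbb{E}}[\tilde{Z}_s h^j\varphi(\bar{X}_s)\mid\mathcal{Y}]\,\mathrm{d}Y_s^j$. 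The hypothesis (\ref{scondu}) is verified using (\ref{usedlater}) and the boundedness of $\varphi$. Adding the two integrals yields (\ref{intermediate1}).

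Finally, to obtain (\ref{eq:filterEqns:zakaifunctint}), specialise to $\varphi\in\mathcal{D}(A)\cap b\mathcal{B}(\mathbb{S})$ (so $\partial_{y_j}\varphi\equiv0$) and apply Proposition \ref{prop:filterEqns:p4}: since each integrand $\tilde{Z}_s\psi(\bar{X}_s)$ is $\mathcal{F}_s$-measurable, the conditional expectation given $\mathcal{Y}$ collapses to one given $\mathcal{Y}_s$, which by Lemma \ref{le:filterEqns:rhoCadlag} equals $\rho_s(\psi)$. The initial term $\pi_0(\varphi)$ is identified with $\rho_0(\varphi)$ via Definition \ref{def:filterEqns:ucd}, and the two stochastic integrals combine into $\int_0^t\rho_s\bigl((h^\top+B^\top)\varphi\bigr)\,\mathrm{d}Y_s$. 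The main technical obstacle I expect is the justification of the interchange between conditional expectation and the stochastic integrals; this is precisely why the strengthened integrability (\ref{adcondonh}) is imposed on top of (\ref{condonh}), since it delivers the Burkholder--Davis--Gundy bound (\ref{usedlater}) needed to apply (\ref{condU}) and (\ref{condu''}). The auxiliary hypothesis $\varphi^2\in\mathcal{D}(A)$ is what controls $\langle M^\varphi\rangle$ through the formula $\int_0^\cdot(A\varphi^2-2\varphi A\varphi)(\bar{X}_s)\,\mathrm{d}s$, which is used to ensure $\int_0^\cdot\tilde{Z}_s\,\mathrm{d}M_s^\varphi$ is a genuine martingale.
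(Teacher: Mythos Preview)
Your proposal is correct and follows essentially the same route as the paper: apply the It\^o product formula to $\tilde{Z}_t\varphi(\bar{X}_t)$, take $\tilde{\mathbb{E}}[\,\cdot\mid\mathcal{Y}]$, handle the drift by conditional Fubini and the two stochastic integrals via (\ref{condU}) and (\ref{condu''}) after checking (\ref{scondu}) through (\ref{usedlater}), then specialise to $\varphi$ independent of $y$. One minor remark: the paper does not actually invoke the hypothesis $\varphi^{2}\in\mathcal{D}(A)$ in its proof---the martingale property of $\int_{0}^{\cdot}\tilde{Z}_{s}\,\mathrm{d}\tilde{M}_{s}^{\varphi}$ is obtained already in Proposition~\ref{prop:filterEqns:YisBM} via the decomposition (\ref{deco}) and the boundedness of $M^{\varphi}$ (which follows from $\varphi,A\varphi\in b\mathcal{B}$), not by controlling $\langle M^{\varphi}\rangle$ as you surmise.
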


\begin{proof}
Using It\^{o}'s formula and integration-by-parts, we find 
\begin{eqnarray}
\mathrm{d}\left( \tilde{Z}_{t}\varphi (\bar{X}_{t})\right) &=&{}\tilde{Z}%
_{t}A\varphi (\bar{X}_{t})\,\mathrm{d}t+\tilde{Z}_{t}\mathrm{d}%
M_{t}^{\varphi }+\varphi (\bar{X}_{t})\tilde{Z}_{t}h^{\top }(\bar{X}_{t})\,%
\mathrm{d}Y_{t}+\sum_{j=1}^{m}\tilde{Z}_{t}h^{i}(\bar{X}_{t})\left\langle
M^{\varphi },Y^{i}\right\rangle _{t}  \notag \\
&=&\tilde{Z}_{t}\left[ A\varphi (\bar{X}_{t})+\sum_{j=1}^{m}h^{i}(\bar{X}%
_{t})\left( B^{i}\varphi \left( \bar{X}_{t}\right) +\frac{\partial \varphi }{%
\partial y_{i}}\left( \bar{X}_{t}\right) \right) \right] {}\mathrm{d}t+%
\tilde{Z}_{t}\mathrm{d}M_{t}^{\varphi }  \notag \\
&&+\varphi (\bar{X}_{t})\tilde{Z}_{t}h^{\top }(\bar{X}_{t})\,\mathrm{d}Y_{t}
\label{lll1} \\
&=&\tilde{Z}_{t}A\varphi (\bar{X}_{t})\mathrm{d}t+\tilde{Z}_{t}\mathrm{d}%
\tilde{M}_{t}^{\varphi }+\varphi (\bar{X}_{t})\tilde{Z}_{t}h^{\top }(\bar{X}%
_{t})\,\mathrm{d}Y_{t}.  \notag
\end{eqnarray}%
We next take the conditional expectation with respect to $\mathcal{Y}$ \ and
obtain 
\begin{eqnarray}
\tilde{\mathbb{E}}[\tilde{Z}_{t}\varphi (\bar{X}_{t}) &\mid &\mathcal{Y}]=%
\tilde{\mathbb{E}}[\tilde{Z}_{0}\varphi (\bar{X}_{t})\mid \mathcal{Y}%
]+\int_{0}^{t}\tilde{\mathbb{E}}[\tilde{Z}_{t}A\varphi (\bar{X}_{t})\mid 
\mathcal{Y}]\,\mathrm{d}s  \notag \\
&&+\tilde{\mathbb{E}}\left[ \int_{0}^{t}\tilde{Z}_{s}\mathrm{d}\tilde{M}%
_{s}^{\varphi }\mid \mathcal{Y}\right] +\tilde{\mathbb{E}}\left[
\int_{0}^{t}\varphi (\bar{X}_{s})\tilde{Z}_{s}h^{\top }(\bar{X}_{s})\,%
\mathrm{d}Y_{s}\mid \mathcal{Y}\right] ,  \label{ll2}
\end{eqnarray}%
where we have used Fubini's theorem (the conditional version) to get the
second term on the right hand side of (\ref{ll2}). Observe that, since $%
\tilde{Z}$ is an $H^{1}(\mathbb{\tilde{P})}$-martingale, we have 
\begin{equation*}
\tilde{\mathbb{E}}\left[ \left( \int_{0}^{t}\tilde{Z}_{s}^{2}\,\mathrm{d}%
s\right) ^{1/2}\right] \leq \sqrt{t}\tilde{\mathbb{E}}\left[ \tilde{Z}%
_{s}^{\ast }\,\right] <\infty .
\end{equation*}%
Also from (\ref{usedlater}) we get that%
\begin{equation*}
\tilde{\mathbb{E}}\left[ \left( \int_{0}^{t}\left( \varphi (\bar{X}_{s})%
\tilde{Z}_{s}h^{j}(\bar{X}_{s})\right) ^{2}\,\mathrm{d}s\right) ^{1/2}\right]
\leq \left\vert \left\vert \varphi \right\vert \right\vert \mathbb{E}\left[
\left( \int_{0}^{t}\tilde{Z}_{s}^{2}\left\vert h(\bar{X}_{s})\right\vert
^{2}ds\right) ^{1/2}\right] <\infty .
\end{equation*}%
In other words condition (\ref{scondu}) is satisfied for $u=\varphi \tilde{Z}%
h^{j}$. The identity (\ref{intermediate1}) then follows from (\ref{ll2}) by
applying (\ref{condU}) and (\ref{condu''}). Identity (\ref%
{eq:filterEqns:zakaifunctint}) follows immediately after observing that the
terms containing the partial derivatives in the $y$ direction $\frac{%
\partial \varphi }{\partial y_{i}}$ are zero since the function no longer
depends on $y$.
\end{proof}

\begin{theorem}
If conditions (\ref{condonh}) and (\ref{adcondonh}) are satisfied then the
conditional distribution of the signal $\pi _{t}$ satisfies the following
evolution equation 
\begin{align}
\pi _{t}(\varphi )={}& \pi _{0}(\varphi )+\int_{0}^{t}\pi _{s}(A\varphi )\,%
\mathrm{d}s  \notag \\
& +\int_{0}^{t}\left( \pi _{s}(\varphi h^{\top })-\pi _{s}(h^{\top })\pi
_{s}(\varphi )+\pi _{t}(B^{\top }\varphi )\right) (\mathrm{d}Y_{s}-\pi
_{s}(h)\,\mathrm{d}s),  \label{eq:filterEqns:ks}
\end{align}%
for any $\varphi \in \mathcal{D}(A)$.
\end{theorem}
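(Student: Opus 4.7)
My plan is to derive (\ref{eq:filterEqns:ks}) by applying It\^o's formula to the quotient $\pi_t(\varphi) = \rho_t(\varphi)/\zeta_t$, where $\zeta_t = \rho_t(\mathbf{1})$, using the Zakai-type equation (\ref{eq:filterEqns:zakaifunctint}) from the previous theorem as the starting point. First I would identify the dynamics of $\zeta$ by setting $\varphi \equiv \mathbf{1}$ in (\ref{eq:filterEqns:zakaifunctint}). Since $A\mathbf{1}=0$ by hypothesis, and since the martingale $M^{\mathbf{1}}$ vanishes identically, the covariance relation (\ref{covariance}) forces $B^i\mathbf{1}=0$ for each $i=1,\ldots,m$. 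Invoking the Kallianpur--Striebel identity $\rho_s = \zeta_s \pi_s$, this yields
\begin{equation*}
\zeta_t = 1 + \int_0^t \zeta_s\,\pi_s(h^\top)\,\mathrm{d}Y_s,
\end{equation*}
exhibiting $\zeta$ as a strictly positive exponential $\tilde{\mathbb{P}}$-martingale, so that $\zeta^{-1}$ is a well-defined semimartingale.

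Next I would apply It\^o's formula, using that $Y$ is a $\tilde{\mathbb{P}}$-Brownian motion (so $\mathrm{d}\langle Y^i,Y^j\rangle_t = \delta_{ij}\,\mathrm{d}t$), to obtain
\begin{equation*}
\mathrm{d}\zeta_t^{-1} = -\zeta_t^{-1}\pi_t(h^\top)\,\mathrm{d}Y_t + \zeta_t^{-1}|\pi_t(h)|^2\,\mathrm{d}t,
\end{equation*}
together with
\begin{equation*}
\mathrm{d}\langle \rho(\varphi), \zeta\rangle_t = \rho_t\bigl((h^\top + B^\top)\varphi\bigr)\rho_t(h)\,\mathrm{d}t.
\end{equation*}
The integration-by-parts formula applied to $\pi_t(\varphi) = \rho_t(\varphi)\zeta_t^{-1}$, combined with $\zeta_s^{-1}\rho_s(\cdot) = \pi_s(\cdot)$, produces after collection of terms
\begin{align*}
\mathrm{d}\pi_t(\varphi) = {}& \pi_t(A\varphi)\,\mathrm{d}t + \bigl[\pi_t(\varphi h^\top) + \pi_t(B^\top\varphi) - \pi_t(\varphi)\pi_t(h^\top)\bigr]\,\mathrm{d}Y_t \\
& + \bigl[\pi_t(\varphi)|\pi_t(h)|^2 - \pi_t(\varphi h^\top)\pi_t(h) - \pi_t(B^\top\varphi)\pi_t(h)\bigr]\,\mathrm{d}t.
\end{align*}
The final step is purely algebraic: the bracketed drift is precisely the contraction of the bracketed diffusion coefficient against $-\pi_t(h)\,\mathrm{d}t$, so regrouping recovers the innovations form (\ref{eq:filterEqns:ks}).

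The main difficulty is not the computation but the bookkeeping. I would need to verify that $B^i\mathbf{1}\equiv 0$ as claimed above, that each stochastic integral encountered is a genuine $\tilde{\mathbb{P}}$-martingale under the standing integrability hypotheses (\ref{condonh}) and (\ref{adcondonh}), and that the quadratic covariation identities rely only on the Brownian character of $Y$ under $\tilde{\mathbb{P}}$. Since $\tilde{\mathbb{P}}$ and $\mathbb{P}$ are equivalent on each $\mathcal{F}_t$, the resulting identity holds almost surely under both measures.
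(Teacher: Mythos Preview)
Your proposal is correct and follows essentially the same route as the paper: derive $\zeta_t=\rho_t(\mathbf{1})=1+\int_0^t\rho_s(h^\top)\,\mathrm{d}Y_s$ from the Zakai equation with $\varphi=\mathbf{1}$, apply It\^o's formula to $\zeta^{-1}$, and then integrate by parts to recover $\pi_t(\varphi)=\rho_t(\varphi)/\rho_t(\mathbf{1})$ in innovations form. The only point the paper treats more explicitly is the justification of It\^o's formula for $x\mapsto 1/x$: rather than asserting that $\zeta^{-1}$ is a semimartingale, it introduces the stopping times $U_n=\inf\{t:\rho_t(\mathbf{1})\le 1/n\}$, performs the computation on the stopped processes, and lets $n\to\infty$; also note that from $\langle M^{\mathbf{1}},W^i\rangle\equiv 0$ one obtains only $\int_0^t B^i\mathbf{1}(\bar X_s)\,\mathrm{d}s=0$, which suffices, rather than $B^i\mathbf{1}\equiv 0$ pointwise.
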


\begin{proof}
Since $A\mathbf{1=}0$, it follows from (\ref{generatorforXand Y}) that $M^{%
\mathbf{1}}\equiv 0,$ which together with (\ref{covariance}) implies that 
\begin{equation*}
\int_{0}^{t}B^{i}\mathbf{1}\left( \bar{X}_{s}\right) ds=0,
\end{equation*}%
for any $t\geq 0$ and $i=1,...,m$, so 
\begin{equation*}
\sum_{j=1}^{m}\int_{0}^{t}\rho _{s}\left( h^{j}B^{j}\mathbf{1}\right) \,%
\mathrm{d}s=0.
\end{equation*}%
\ Hence, from (\ref{eq:filterEqns:zakaifunctint}), one obtains that $\rho
_{t}(\mathbf{1})$ satisfies the following equation 
\begin{equation*}
\rho _{t}(\mathbf{1})=1+\int_{0}^{t}\rho _{s}(h^{\top })\,\mathrm{d}Y_{s}.
\end{equation*}%
Let $\left( U_{n}\right) _{n>0}$ be the sequence of stopping times%
\begin{equation*}
U_{n}=\inf \left\{ t\geq 0\left\vert \rho _{t}(\mathbf{1})\leq \frac{1}{n}%
\right. \right\} .
\end{equation*}%
Then 
\begin{equation*}
\rho _{t}^{U_{n}}(\mathbf{1})=\rho _{t\wedge U_{n}}(\mathbf{1}%
)=1+\int_{0}^{t\wedge U_{n}}\rho _{s}(h^{\top })\,\mathrm{d}Y_{s},
\end{equation*}%
We apply It\^{o}'s formula to the stopped process $t\rightarrow \rho
_{t\wedge U_{n}}(\mathbf{1})$ and the function $x\mapsto \frac{1}{x}$ to
obtain that 
\begin{equation}
\frac{1}{\rho _{t}^{U_{n}}(\mathbf{1})}=1-\int_{0}^{t\wedge U_{n}}\frac{\rho
_{s}(h^{\top })\,}{\rho _{s}(\mathbf{1})^{2}}\mathrm{d}Y_{s}+\int_{0}^{t%
\wedge U_{n}}\frac{\rho _{s}(h^{\top })\rho _{s}(h)}{\rho _{s}(\mathbf{1}%
)\,^{3}}\,\mathrm{d}s  \label{eq:filterEqns:rhotof1}
\end{equation}%
By using (stochastic) integration by parts, (\ref{eq:filterEqns:rhotof1}),
the equation for $\rho _{t}(\varphi )$ and the Kallianpur--Striebel formula,
we obtain 
\begin{eqnarray*}
\frac{\rho _{t}^{U_{n}}(\varphi )}{\rho _{t}^{U_{n}}(\mathbf{1})} &=&\pi
_{0}\left( \varphi \right) +\int_{0}^{t\wedge U_{n}}\pi _{s}\left( A\varphi
\right) \,\mathrm{d}s+\int_{0}^{t\wedge U_{n}}\pi _{s}((h^{\top }+B^{\top
})\varphi )\,\mathrm{d}Y_{s}-\int_{0}^{t\wedge U_{n}}\pi _{s}(\varphi )\pi
_{s}(h^{\top })\mathrm{d}Y_{s} \\
&&+\int_{0}^{t\wedge U_{n}}\pi _{s}(\varphi )\pi _{s}(h^{\top })\pi _{s}(h)\,%
\mathrm{d}s-\int_{0}^{t\wedge U_{n}}\pi _{s}((h^{\top }+B^{\top })\varphi
)\,\pi _{s}(h)\mathrm{d}s
\end{eqnarray*}%
As $\lim_{n\rightarrow \infty }U_{n}=\infty $ almost surely, we obtain the
result by taking the limit as $n$ tends to infinity.
\end{proof}

\begin{remark}
The jump-diffusion example and the change detection model discussed in
Section \ref{examples} both satisfy conditions (\ref{condonh}) and (\ref%
{adcondonh'}). Therefore the two previous theorems can be applied to these
two cases.
\end{remark}

\textbf{Acknowledgments.} The authors are grateful to J. Ruf for setting us
straight on the provenance of Corollary \ref{corgenuinemart} and on other
points in the paper.

\end{document}